\date{}
\theoremstyle{definition}
\newtheorem{theorem}{Theorem}[section]
\newtheorem{lemma}[theorem]{Lemma}
\newtheorem{proposition}[theorem]{Proposition}
\newtheorem{claim}[theorem]{Claim}
\title{On cliques in three-dimensional dense point-line arrangements}
\author{Andrew Suk\thanks{Department of Mathematics, University of California San Diego, La Jolla, CA, 92093 USA. Supported by NSF CAREER award DMS-1800746 and NSF award DMS-1952786. Email: {\tt asuk@ucsd.edu}.} \and Ji Zeng\thanks{University of California San Diego, La Jolla, CA and Alfr\'ed R\'enyi Institute of Mathematics, Budapest, Hungary. Supported by NSF grant DMS-1800746. Partially supported by ERC grant No. 882971, ``GeoScape'', and by the Erd\H os Center. Email:{\tt jzeng@ucsd.edu}.}}
\begin{document}

\maketitle

\begin{abstract}
As a variant of the celebrated Szemer\'edi--Trotter theorem, Guth and Katz proved that $m$ points and $n$ lines in $\mathbb{R}^3$ with at most $\sqrt{n}$ lines in a common plane must determine at most $O(m^{1/2}n^{3/4})$ incidences for $n^{1/2}\leq m\leq n^{3/2}$. This upper bound is asymptotically tight and has an important application in the Erd{\H o}s distinct distances problem. We characterize the extremal constructions towards the Guth--Katz bound by proving that such a large dense point-line arrangement must contain a $k$-clique in general position provided $m \ll n$. This is an analog of a result by Solymosi for extremal Szemer\'edi--Trotter constructions in the plane.
\end{abstract}

\section{Introduction}\label{sec_intro}

A \textit{point-line arrangement} is a pair of a point set $P$ and a line set $L$ both in a common Euclidean space. An \text{incidence} of $(P,L)$ is a pair $(p,\ell) \in P\times L$ such that $p\in \ell$. We use $I(P, L)$ to denote the collection of all incidences of $(P, L)$. The celebrated Szemer\'edi--Trotter theorem \cite{szemeredi1983extremal} states that any arrangement $(P,L)$ in the Euclidean plane $\mathbb{R}^2$ satisfies\begin{equation}\label{szemereditrotter}
    |I(P,L)| < O\left(|P|^{\frac{2}{3}}|L|^{\frac{2}{3}} + |P| + |L|\right).
\end{equation} This bound is asymptotically tight in the worst case, and many results suggest that the extremal constructions towards the Szemer\'edi--Trotter bound have special structures \cite{solymosi2006dense,mirzaei2021grids,suk2021hasse,sheffer2023structural,katz2023structure,dasu2023structural}. In 2006, Solymosi \cite{solymosi2006dense} proved that a large dense arrangement in $\mathbb{R}^2$ must contain a $k$-clique in general position. A \textit{$k$-clique} of $(P, L)$ refers to a $k$-subset of $P$ with each pair incident to a common line of $L$. A $k$-clique $\tau$ in $\mathbb{R}^d$ is said to be in \textit{general position} if no hyperplane contains $d+1$ points of $\tau$.
\begin{theorem}[Solymosi]\label{solymosi}
    For every integer $k \geq 3$ and real $c > 0$, there exists a constant $n_0 = n_0(k,c)$ such that if an arrangement $(P,L)$ of $n$ points and $n$ lines in $\mathbb{R}^2$ satisfies $n \geq n_0$ and $|I(P,L)| > c n^{4/3}$, then $(P,L)$ contains a $k$-clique in general position.
\end{theorem}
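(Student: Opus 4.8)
The plan is to prove Theorem~\ref{solymosi} by induction on $k$, in the stronger, quantitative form ``a dense arrangement contains not merely one but $n^{\Omega(1)}$ general-position $k$-cliques''; the Szemer\'edi--Trotter bound \eqref{szemereditrotter} is used decisively both to normalize the arrangement and to settle the base case $k=3$. First, by a routine dyadic-pigeonholing argument together with \eqref{szemereditrotter}, one extracts from $(P,L)$ a sub-arrangement $(P',L')$ in which every point of $P'$ lies on $\Theta(n^{1/3})$ lines of $L'$, every line of $L'$ contains $\Theta(n^{1/3})$ points of $P'$, and $|P'|$, $|L'|$, $|I(P',L')|$ are $\Theta(n)$, $\Theta(n)$, $\Theta(n^{4/3})$ respectively, all constants depending only on $c$; since a general-position $k$-clique for $(P',L')$ is one for $(P,L)$, we may work inside this \emph{regular} arrangement. (It is convenient to prove the statement for the slightly larger class of dense regular, possibly unbalanced, arrangements, so that the reduction below stays within the class; recall that in the plane ``general position'' just means no line carries three of the points.)

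For the inductive step I would localize at a point. Let $H$ be the \emph{collinearity graph} on $P'$, joining $p\sim q$ when a line of $L'$ contains both; for $v\in P'$ the neighborhood $N(v)$ is the disjoint union of the \emph{pencils} $\ell\cap P'\setminus\{v\}$ over lines $\ell\ni v$. The key observation is that if $\tau\subseteq N(v)$ is a $(k-1)$-clique of $H$ that is in general position and meets every pencil in at most one point, then $\tau\cup\{v\}$ is a general-position $k$-clique, since a line through three of its points would either lie inside a pencil or contain three points of $\tau$. A counting argument, with \eqref{szemereditrotter} pinning down the parameters, shows that for a positive fraction of the points $v$ the ``link'' $(N(v),L_v)$---with $L_v$ the lines of $L'$ not through $v$ meeting $N(v)$ in at least two points---is, after re-regularization, again a dense regular arrangement on $n^{\Theta(1)}$ points. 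Granting the case $k-1$, I would then bound the number of general-position $k$-cliques below by summing over such $v$ the general-position $(k-1)$-cliques of its link, and subtracting (i) those $\tau$ reusing a pencil point---few, as each pencil has only $\Theta(n^{1/3})$ points and one can control how many $(k-1)$-cliques pass through a fixed collinear pair using regularity and the level-$(k-1)$ count---and (ii) those for which $v$ lies on one of the $\binom{k-1}{2}$ lines spanned by $\tau$---at most $O_k(n^{1/3})$ choices of $v$ per $\tau$, negligible against the link codegrees. This delivers the supersaturation bound at level $k$.

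The base case $k=3$ is the heart of the matter and the place where planar geometry, via \eqref{szemereditrotter}, is indispensable. A general-position triangle is a triple of lines of $L'$ meeting pairwise in three distinct points of $P'$, equivalently an edge $xy$ of $H$ on a line $\ell$ together with a common neighbor of $x$ and $y$ off $\ell$. If such triangles were scarce, the collinearity structure would be rigid: for a rich line $\ell$, the sets $N(p)\setminus\ell$ for $p\in\ell\cap P'$ would be forced pairwise disjoint and mutually ``independent,'' partitioning $P'$ into $\Theta(n^{1/3})$ fibers of size $\Theta(n^{2/3})$, each meeting every line of $L'$ at most once except for the $\Theta(n^{1/3})$ lines of its own pencil. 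Feeding this rigidity back into \eqref{szemereditrotter}---applied to the auxiliary arrangement formed by the lines transversal to these fibers, or via a double count of two-edge paths in the line-intersection graph of $L'$---should force $|I(P',L')|=o(n^{4/3})$, contradicting regularity; run quantitatively with the true number of triangles it yields the $k=3$ supersaturation estimate directly. Turning the word ``rigidity'' into a precise overcounting inequality is the step I expect to be the main obstacle.

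Finally, chaining the estimates---from $k=2$, where the number of general-position $2$-cliques is simply the number of collinear pairs, $\Theta(n^{5/3})$, through the base case $k=3$, and then through $k-2$ applications of the inductive step---gives a strictly positive lower bound on the number of general-position $k$-cliques provided $n\ge n_0(k,c)$, where $n_0$ absorbs the $c$-dependent normalization constants and the bounded number of constant-factor losses incurred along the way. This proves Theorem~\ref{solymosi}.
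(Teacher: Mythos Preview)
The paper does not give its own proof of Theorem~\ref{solymosi}; the result is quoted from \cite{solymosi2006dense}. The closest surrogate is the paper's proof of the three-dimensional analog (Theorem~\ref{main}), which by the paper's own account follows Solymosi's method: form the collinearity graph on $L$-collinear pairs of bounded multiplicity, apply Szemer\'edi's regularity lemma (Lemma~\ref{reglem}) to locate $k$ parts whose pairwise densities are large and $\epsilon$-regular, run a same-type/ham-sandwich argument (Claim~\ref{sametypeclaim}) to shrink the parts so that every transversal is in general position, and conclude with the counting lemma (Lemma~\ref{graphcount}). This handles all $k$ simultaneously and uses no induction on $k$.

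Your route is genuinely different, and it carries a gap you yourself identify. The entire weight of the induction rests on the base case $k=3$, and that case is not proved. From ``few general-position triangles'' you only get that \emph{most} pairs on a rich line $\ell$ have nearly disjoint off-$\ell$ neighborhoods; the clean fiber picture you describe (the $N(p)\setminus\ell$ pairwise disjoint, each met at most once by any line of $L'$) does not follow, and even granting it, no concrete auxiliary arrangement is produced to which \eqref{szemereditrotter} can be applied to force $|I(P',L')|=o(n^{4/3})$. The phrase ``should force'' and your own remark that this is ``the main obstacle'' are accurate: as written, the $k=3$ step is a heuristic, not an argument. The inductive step is also incomplete in the same direction: to show that the link $(N(v),L_v)$ is again dense you need many lines of $L'$ cutting $N(v)$ in at least two points, i.e.\ many two-edge paths in the collinearity graph whose middle vertex is off the pencil at $v$---which is exactly a supersaturation statement for general-position triangles through $v$. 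So the induction repackages the $k=3$ difficulty rather than bypassing it. By contrast, the regularity-plus-same-type approach avoids any such base case: once you have $k$ dense regular parts, the ham-sandwich pruning gives general position for free, and the counting lemma finishes.
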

\noindent Following Solymosi's argument, it's easy to generalize this result to large dense arrangements of $m$ points and $n$ lines provided $n^{1/2}\lesssim m\lesssim n^2$.

For a point-line arrangement in any higher dimension, we can project it onto a generic (two-dimensional) plane so that \eqref{szemereditrotter} still holds. In the breakthrough paper by Guth and Katz \cite{guth2015erdHos} on the Erd{\H o}s distinct distances problem, an improved Szemer\'edi--Trotter-type bound (see also Theorem~12.1 in \cite{guth2016polynomial}) was derived for arrangements in $\mathbb{R}^3$.
More specifically, they showed that any arrangement $(P,L)$ in $\mathbb{R}^3$ with at most $B$ lines of $L$ in a common plane satisfies\begin{equation}\label{guthkatz}
    |I(P,L)| < O\left(|P|^{\frac{1}{2}}|L|^{\frac{3}{4}} + B^{\frac{1}{3}}|P|^{\frac{2}{3}}|L|^{\frac{1}{3}}+|P|+|L|\right).
\end{equation} This bound is also asymptotically tight when $B = \sqrt{|L|}$, see Example~3 in \cite{guth2015erdHos} and our Section~\ref{sec_lower}.

Inspired by the theorem of Solymosi \cite{solymosi2006dense}, we prove the following characterization of extremal constructions towards the Guth--Katz bound.
\begin{theorem}\label{main1}
    For every integer $k \geq 4$ and real $c > 0$, there exist constants $n_0 = n_0(k,c)$ and $\delta = \delta(k,c)$ such that if an arrangement $(P,L)$ of $m$ points and $n$ lines in $\mathbb{R}^3$ satisfies: $n_0< n^{1/2} \leq m \leq \delta n$; $|I(P,L)| > c m^{1/2}n^{3/4}$; and at most $\sqrt{n}$ lines of $L$ lie in a common plane, then $(P,L)$ contains a $k$-clique in general position.
\end{theorem}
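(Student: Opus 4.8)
The plan is to follow Solymosi's strategy from the planar case, but the part of the argument that actually produces the clique has to be rebuilt: in $\mathbb{R}^3$ a clique in general position contains no four coplanar points, so it cannot be obtained as a rich point together with many points of $P$ on a single auxiliary line, since all but one of those points would then be collinear. Throughout I would encode $(P,L)$ by the graph $G$ on vertex set $P$ in which $q\sim q'$ exactly when the line $qq'$ lies in $L$. As two points span a unique line, $|E(G)|=\sum_{\ell\in L}\binom{|\ell\cap P|}{2}$, a $k$-clique of $(P,L)$ is precisely a $K_k$ in $G$, and the hypothesis that at most $\sqrt n$ lines of $L$ lie in a common plane says that for every plane $\pi$ the subgraph $G[P\cap\pi]$ is covered by at most $\sqrt n$ cliques, namely the point sets of the lines of $L$ contained in $\pi$.

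First I would clean the arrangement. Using $\sum_\ell\binom{|\ell\cap P|}{2}\le\binom m2$, delete every line carrying more than $O(m^{3/2}/n^{3/4})$ points, then iteratively delete points meeting fewer than $\varepsilon n^{3/4}/m^{1/2}$ surviving lines; with $\varepsilon$ small in terms of $c$ this destroys at most half the incidences, and it is precisely $m\le\delta n\le n^{3/2}$ together with the planar hypothesis that makes the first term of \eqref{guthkatz} the dominant one, so that our arrangement is near-extremal for it. What survives — still written $(P,L)$ — has $|I(P,L)|\gtrsim m^{1/2}n^{3/4}$ (implied constant depending on $c$), every point on $\gtrsim n^{3/4}/m^{1/2}$ lines, every line carrying at most $T:=O(m^{3/2}/n^{3/4})$ points, and still at most $\sqrt n$ lines in any plane.

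The engine is a three-dimensional analogue of Solymosi's ``dense arrangements are locally very dense''. After dyadic pigeonholing on point- and line-richness, one repeatedly passes to a local subarrangement, each step replacing the point set by the neighbourhood $N(q)$ of a point $q$ that is — by a double count — simultaneously above average in $|N(q)|$ and in $\sum_{p\in N(q)}(\deg p-1)$, and replacing the line set by the lines meeting $N(q)$ in at least two points. Such a step multiplies the excess density, the ratio of the incidence count to $M^{1/2}N^{3/4}$ with $M,N$ the current numbers of points and lines, by a factor bounded below by a constant exceeding $1$; it keeps $T$ a valid richness bound; and — this is the structural point — it never creates more than $\sqrt n$ lines in a plane. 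As \eqref{guthkatz} caps the excess density by a fixed power of $n$, this iteration terminates in a subarrangement dense enough to carry out the following greedy construction. One maintains a clique $q_1,\dots,q_i$ in general position together with the reservoir $R_i=\bigcap_{j\le i}N(q_j)$, and extends it by picking $q_{i+1}\in R_i$ lying off the $\binom i3$ planes spanned by triples of the current clique: a plane through such a triple passes through some $q_j$, hence meets $R_i\subseteq N(q_j)$ only along the at most $\sqrt n$ lines of $L$ through $q_j$ that it contains, so in at most $\sqrt n\cdot T=O(m^{3/2}/n^{1/4})$ points — this is where the hypothesis $m\le\delta n$ enters — while sufficient denseness keeps each reservoir, and its expansion across the $\le k$ steps, well above $\binom k3\sqrt n\,T$. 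As a consistency check, the super-dense piece should look essentially like the Guth--Katz lower-bound construction, in which such a clique is visible along a moment-curve arc $\{(i,i^2,i^3)\}$: its points are pairwise $L$-collinear by construction, and no four of them are coplanar by a Vandermonde determinant.

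The step I expect to be the main obstacle is this amplification, and especially making it cohabit with the degeneracy bookkeeping. Unlike in the plane, restricting to a neighbourhood $N(q)$ and to the lines through its points carries no clean ``at most $\sqrt{(\#\text{lines})}$ per plane'' guarantee relative to the new line count, so one cannot first amplify and only afterwards control the coplanar quadruples: the choice of which local subarrangement to pass to at each stage must already be made with the final general-position count in mind, and quantifying the density increment sharply enough that the iteration provably reaches the denseness required above for every fixed $c>0$, while simultaneously keeping the forbidden planes thin, is where the real work lies. (The base case $k=4$ is easier and can be handled directly: a dense planar section yields, via Theorem~\ref{solymosi}, a triangle of $G$ inside that plane with non-collinear vertices, and any common neighbour of the three that lies off the plane completes a $4$-clique in general position.)
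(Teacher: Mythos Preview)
Your outline follows Solymosi's neighbourhood-iteration strategy, but the step you yourself flag as ``the main obstacle'' is a genuine gap, not a detail. In the plane the iteration terminates because the Szemer\'edi--Trotter bound caps the excess density $|I|/(M^{2/3}N^{2/3})$ \emph{unconditionally}; here the analogous cap on $|I|/(M^{1/2}N^{3/4})$ comes from \eqref{guthkatz}, which only applies under a non-degeneracy hypothesis of the form ``at most $B$ lines of the current $L$ in a plane'' with $B$ comparable to $\sqrt{N}$. Passing to a neighbourhood $N(q)$ and to the lines meeting it twice does preserve the absolute bound ``$\le\sqrt n$ lines per plane'', as you note, but it does \emph{not} preserve the relative bound $B\lesssim\sqrt{N'}$ for the new line count $N'$, so you have no way to invoke \eqref{guthkatz} inside the subarrangement, and therefore no reason for the excess-density increment to ever stop. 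Your last paragraph essentially concedes this; without resolving it there is no proof. The greedy endgame has a related quantitative problem: the forbidden count $\sqrt n\cdot T\asymp m^{3/2}/n^{1/4}$ is measured in the \emph{original} parameters, while the reservoirs live in the amplified subarrangement, and you give no mechanism linking the latter's size to the former.

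The paper avoids the amplification issue entirely and uses a different toolkit. It applies polynomial partitioning with degree $D\asymp m^{1/2}n^{-1/4}$, shows via Proposition~\ref{algebraic_incidence_estimate} that most incidences lie in the open cells, and by pigeonhole finds a single cell $O_{i_0}$ on whose $N=O(m^{-1/2}n^{3/4})$ points the induced graph $G$ has $\Omega(N^2)$ edge-disjoint $K_k$'s coming from lines with between $k$ and a constant $R$ points in the cell. Szemer\'edi's regularity lemma then yields $k$ parts $V_1,\dots,V_k$ that are pairwise $\epsilon$-regular with density bounded below, and the graph counting lemma would already give many $K_k$'s; the general-position requirement is handled by a same-type-lemma argument: using Ham--Sandwich, one iteratively halves the parts by planes so that every transversal is in general position, and the hypothesis ``$\le\sqrt n$ lines per plane'' is used exactly once, to show that no bisecting plane can contain a positive fraction of two of the parts (else regularity would force $\gg\sqrt n$ lines of $L$ into that plane). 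This is where $m\le\delta n$ enters, to make $\sqrt n\le\delta m^{-1}n^{3/2}$.
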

\noindent The ``truly three-dimensional'' condition (i.e.~there are at most $\sqrt{n}$ coplanar lines) is a natural requirement so that \eqref{guthkatz} is dominated by the $|P|^{1/2}|L|^{3/4}$ term. However, the ``$|P|\ll |L|$'' condition is due to a technical obstacle in our proof. We remark that the latter condition can be removed if one merely wishes to find a $k$-clique without collinear triples, but here, $k$-cliques in general position also forbid coplanar quadruples.

Using the probabilistic method, it's not hard to construct point-line arrangements with many incidences and no $k$-cliques in general position. 
\begin{theorem}\label{lowerbound2d}
    For integers $k \geq 4$ and $n \to \infty$, there exists an arrangement $(P,L)$ of $n$ points and $n$ lines in $\mathbb{R}^2$ with no $k$-cliques in general position and $|I(P,L)| > n^{4/3 - O(1/k)}$.
\end{theorem}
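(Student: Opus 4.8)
The plan is to take the standard Szemer\'edi--Trotter extremal grid, randomly delete most of its lines, and then destroy the few remaining general-position $k$-cliques by removing one point from each. Fix a large integer $S$ and set $p = S^{-\gamma}$ with $\gamma = 4/k$ (any $\gamma$ with $\tfrac{4(k-1)}{k^2+k-4}<\gamma\le 1$ works, and such $\gamma$ exist since $k\ge 4$). Let $P_0 = \{0,\dots,X-1\}\times\{0,\dots,2XS-1\}$ with $X=\lceil pS\rceil$, and let $L_0$ be the set of lines $y=ax+b$ with $a\in\{0,\dots,S-1\}$ and $b\in\{0,\dots,XS-1\}$. Then each line of $L_0$ meets $P_0$ in exactly its $X$ points with $x$-coordinate in $\{0,\dots,X-1\}$, every point of $P_0$ lies on at most $S$ lines of $L_0$ (one per slope), and $|P_0|\asymp p^2S^3$, $|L_0|\asymp pS^3$, $|I(P_0,L_0)| = |L_0|X\asymp p^2S^4$. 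This is the usual tight Szemer\'edi--Trotter grid, except that the horizontal side $X$ is taken much shorter than $S$, so that after the sparsification below the surviving line count will match the point count.

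Two elementary facts are needed. First, choosing the vertices of a $k$-clique of $(P_0,L_0)$ greedily --- the first anywhere in $P_0$, and each later one on one of the at most $S$ lines of $L_0$ through the first, each such line carrying $X$ points --- shows $(P_0,L_0)$ has at most $|P_0|(SX)^{k-1}\asymp p^{k+1}S^{2k+1}$ cliques of size $k$. Second, if a $k$-clique is in general position then no three of its points are collinear, so its $\binom k2$ connecting lines are pairwise distinct elements of $L_0$. Now keep each line of $L_0$ independently with probability $p$, getting $L'$. By linearity of expectation, $\mathbb{E}|I(P_0,L')| = p|I(P_0,L_0)|\asymp p^3S^4$, $\mathbb{E}|L'| = p|L_0|\asymp p^2S^3\asymp|P_0|$, and the expected number of general-position $k$-cliques that survive (all $\binom k2$ of their distinct lines landing in $L'$) is at most $p^{\binom k2}\cdot p^{k+1}S^{2k+1}$. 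A short computation with $\gamma=4/k$ gives $p^{\binom k2+k+1}S^{2k+1}\cdot S = o(p^3S^4)$ as $S\to\infty$. Hence, by Markov's inequality for the clique count and a Chernoff bound for $|L'|$, some outcome has $|L'|\asymp p^2S^3$, $|I(P_0,L')|\gtrsim p^3S^4$, and only $o(p^3S^3)$ surviving general-position $k$-cliques. Deleting one point from each of those removes $o(p^3S^3)\cdot S = o(p^3S^4)$ incidences (as each point lies on at most $S$ lines) and leaves no general-position $k$-clique, since any such clique would be one already destroyed. This yields an arrangement $(P,L')$ with $|P|\asymp|L'|\asymp p^2S^3$, $|I(P,L')|\gtrsim p^3S^4$, and no $k$-clique in general position.

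Given a large target $n$, pick $S$ so that $p^2S^3\asymp n$, then add $O(n)$ generically placed isolated points and $O(n)$ generically placed lines through none of the points, so the arrangement has exactly $n$ points and $n$ lines; this changes neither the incidence count nor the set of cliques. Since $p^3S^4 = S^{4-3\gamma}$ and $n\asymp S^{3-2\gamma}$, the incidence count is $\gtrsim n^{(4-3\gamma)/(3-2\gamma)}$; and $\tfrac{4-3\gamma}{3-2\gamma} = \bigl(\tfrac43-\gamma\bigr) + (4-3\gamma)\tfrac{2\gamma}{3(3-2\gamma)}\ge\tfrac43-\gamma = \tfrac43-O(1/k)$, so $|I(P,L)|>n^{4/3-O(1/k)}$ for $n$ large, as required.

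The construction is routine; the one real point is calibrating $p=n^{-\Theta(1/k)}$. It must be small enough that the expected number of surviving general-position $k$-cliques is negligible next to the surviving incidence count --- this is where it matters that such a clique has $\binom k2$ pairwise distinct lines, so survives only with probability $p^{\binom k2}$ --- yet large enough that the incidence count is still $n^{4/3-O(1/k)}$. The two demands are compatible because $\binom k2=\Theta(k^2)$ grows quadratically while the $k$-clique count and the incidence count grow only like fixed powers of the grid parameters, so a threshold exponent $\gamma=\Theta(1/k)$ separates them; pushing $\gamma$ down toward $\tfrac{4(k-1)}{k^2+k-4}$ optimizes the constant in the $O(1/k)$ loss but is not needed.
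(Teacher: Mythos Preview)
Your proof is correct and follows essentially the same approach as the paper: randomly thin the tight Szemer\'edi--Trotter grid and exploit that a general-position $k$-clique has $\binom{k}{2}$ pairwise distinct lines, so survives the sampling only with probability $p^{\binom{k}{2}}$. The only differences are bookkeeping: the paper samples both points and lines (with probability $q=N^{-2/k}$) from the balanced $N\times N$ grid $\mathcal{A}^2_N$ and checks via Markov that with positive probability \emph{no} general-position $k$-clique survives, whereas you sample only lines from a pre-narrowed rectangular grid (so that $|L'|\asymp|P_0|$ afterward) and then clean up the $o(p^3S^3)$ surviving cliques by the alteration method. Both variants yield the same $n^{4/3-O(1/k)}$ bound.
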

\begin{theorem}\label{lowerbound3d}
    For integers $k \geq 4$ and $m,n\to\infty$ with $n^{1/2}\leq m\leq n$, there exists an arrangement $(P,L)$ of $m$ points and $n$ lines in $\mathbb{R}^3$ with no $k$-cliques in general position, at most $\sqrt{n}$ lines of $L$ in a common plane, and $|I(P,L)| > m^{1/2 - O(1/k)} n^{3/4}$.
\end{theorem}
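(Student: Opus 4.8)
Here is how I would attack Theorem~\ref{lowerbound3d}. The plan is to combine an elementary direct construction, valid when $m$ is close to $n^{1/2}$, with a random-sampling argument applied to the extremal configuration witnessing the tightness of \eqref{guthkatz}; the sampling probability is exactly what produces the $m^{-O(1/k)}$ loss, and it is here that the hypothesis $k\ge 4$ is needed.

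For $m$ in the sparse range I would take $P$ to be $m$ points in general position in $\mathbb R^3$ (no three collinear, no four coplanar, e.g.\ on the moment curve), and let $L$ consist of the lines spanned by the edges of a $K_k$-free graph on $P$ — by Tur\'an's theorem one can include $\Theta(m^2)$ such lines, or exactly $n$ of them if $n$ is at most that — together with enough generically placed lines incident to no point of $P$ to make $|L|=n$. This arrangement contains no $k$-clique at all (hence none in general position), every plane contains at most three lines of $L$ (two coplanar pair-lines would force four coplanar points of $P$), and $|I(P,L)|=\Theta(\min\{n,m^2\})$ since every retained pair-line carries exactly two points; a short computation shows this exceeds $m^{1/2-O(1/k)}n^{3/4}$ as long as $m\lesssim n^{1/2+O(1/k)}$.

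For the remaining (dense) range I would start from the Guth--Katz extremal construction (Example~3 of \cite{guth2015erdHos}; see also Section~\ref{sec_lower}): a grid of points together with the lines $t\mapsto(t,at+b,ct+d)$, with the two families of slopes taken balanced (which is also what pins the maximum number of coplanar lines at $\sqrt{|L|}$), and choose its parameters so that $|P|=\Theta(m)$ and $|L|=n_0:=\Theta(n/p)$, where $p:=m^{-3/k}$. Now retain each line independently with probability $p$ and keep all points. Three standard estimates finish the job: (i) the expected number of incidences is $p\cdot\Theta\!\left(m^{1/2}n_0^{3/4}\right)=\Theta\!\left(p^{1/4}m^{1/2}n^{3/4}\right)=m^{1/2-O(1/k)}n^{3/4}$, and it concentrates by McDiarmid's inequality since deleting one line changes the count by at most $\max_\ell|\ell\cap P|$; (ii) the expected number of general-position $k$-cliques is at most $\binom{m}{k}p^{\binom k2}\le m^{k}p^{\binom k2}=m^{(3-k)/2}=o(1)$, since such a clique forces $\binom k2$ distinct lines all to survive; (iii) by a Chernoff bound and a union bound over the $\le\binom{n_0}{2}$ planes carrying two or more lines, with probability $1-o(1)$ every plane retains at most $\sqrt n$ of its lines (using $p=m^{-3/k}\to 0$). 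A union bound over these $O(1)$ failure events yields, with positive probability, an arrangement with all the required properties, after a negligible adjustment of the numbers of points and lines. Note that for $k=3$ step (ii) would force $p$ so small that only $\Theta(m^{1/4}n^{3/4})$ incidences survive, which is exactly why $k\ge 4$ is assumed.

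The two ranges overlap once the constant hidden in the $O(1/k)$ loss is chosen large enough (allowing the loss $m^{1/k}$ already suffices), so together they cover the whole range $n^{1/2}\le m\le n$. The steps that require genuine care, rather than routine bookkeeping, are: checking that Example~$3$ with $\Theta(n/p)$ lines still lies inside the admissible window $|L|^{1/2}\le|P|\le|L|$ of \eqref{guthkatz} precisely on the part of the range left uncovered by the elementary construction (this is what dictates the exponent in $p$), and verifying its coplanarity bound, for which the planes not of the form $\{y=ax+b\}$ must be examined as well.
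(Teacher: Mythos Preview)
Your argument is correct, but it is organized differently from the paper's. The paper samples \emph{both} points and lines from a scaled-up extremal arrangement $\mathcal{A}^3_{M,N}$, each with the same probability $q=M^{-2/k}$, where $M=(2m)^{k/(k-2)}$ and $N\asymp n\cdot m^{2/(k-2)}$; the clique-survival probability is then $q^{k+\binom{k}{2}}$, and a single computation (together with the observation that the constraint $N^{1/2}\le M\le N^{3/2}$ holds for, say, $k\ge 12$, which is absorbed into the $O(1/k)$) handles the entire range $n^{1/2}\le m\le n$ at once. You instead keep all points and sample only lines with probability $p=m^{-3/k}$; the survival probability becomes $p^{\binom{k}{2}}$, which still yields $m^{(3-k)/2}=o(1)$ expected general-position $k$-cliques, but the constraint $(n/p)^{1/2}\le m$ now forces $m\ge n^{k/(2k-3)}$, which is why you need the separate elementary Tur\'an-type construction for the sparse regime. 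The case split is genuine extra work, though your sparse construction is pleasant in its own right (and yields a stronger coplanarity bound of $3$ rather than $\sqrt n$). Both approaches are sound; the paper's buys uniformity and a shorter write-up, while yours exhibits an explicit non-algebraic example at the bottom of the range.
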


Szemer\'edi--Trotter-type bounds can often be restated in terms of upper bounds on rich points. For a line set $L$ in a Euclidean space, the \textit{$r$-rich points} of $L$, denoted by $P_r(L)$, are points incident to at least $r$ lines in $L$. For example, the classical Szemer\'edi--Trotter bound \eqref{szemereditrotter} is equivalent to $|P_r(L)| < O(|L|^2/r^3 + |L|/r)$. Guth and Katz (Theorem~2.11 in \cite{guth2015erdHos}) showed that any line set $L$ in $\mathbb{R}^3$ with at most $\sqrt{|L|}$ lines of $L$ in a common plane satisfies \begin{equation} \label{guthkatzrich}
    |P_r(L)| < O\left(|L|^{\frac{3}{2}}/r^2\right) \quad\text{for}\quad 3\leq r \leq |L|^{1/2}.
\end{equation} As remarked in Chapter 12 of \cite{guth2016polynomial}, \eqref{guthkatz} only implies \eqref{guthkatzrich} for $r_0\leq r\leq |L|^{1/2}$, where $r_0$ depends on the constant hidden in the $O$-notation of \eqref{guthkatz}. (Extra work needs to be taken to prove \eqref{guthkatzrich} for smaller $r$.)

In view of Theorem~\ref{main1}, we have the following bound on rich points. The function ``$\log(n)$'' below can be replaced with any function that grows to infinity.
\begin{theorem}\label{main2}
    For every integer $k \geq 4$, if $L$ is a set of $n$ lines in $\mathbb{R}^3$ with at most $\sqrt{n}$ lines of $L$ in a common plane, and if the arrangement $(P_r(L),L)$ contains no $k$-cliques in general position for some integer $r$ with $n^{1/4}\cdot\log(n)\leq r \leq n^{1/2}/\log(n)$, then $|P_r(L)| < o(n^{3/2}/r^2)$.
\end{theorem}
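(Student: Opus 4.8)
The plan is to deduce Theorem~\ref{main2} directly from Theorem~\ref{main1} by contradiction, using the rich-point set $P_r(L)$ itself as the point set $P$ in Theorem~\ref{main1} — no sub-sampling is needed. Suppose the conclusion fails: then there is a real $c>0$ and, for infinitely many $n$, a line set $L$ of size $n$ with at most $\sqrt n$ lines coplanar and an integer $r$ with $n^{1/4}\log n\le r\le n^{1/2}/\log n$, such that $(P_r(L),L)$ contains no $k$-clique in general position, yet $|P_r(L)|\ge c\,n^{3/2}/r^2$. I would argue that for all large such $n$ the arrangement $(P_r(L),L)$ satisfies every hypothesis of Theorem~\ref{main1}, which contradicts the assumption on $k$-cliques.

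Write $m=|P_r(L)|$. The first step is the incidence count: each point of $P_r(L)$ lies on at least $r$ lines, so $|I(P_r(L),L)|\ge rm$, and from $m\ge c\,n^{3/2}/r^2$ one gets $r\sqrt m\ge\sqrt c\,n^{3/4}$, hence $|I(P_r(L),L)|\ge rm\ge\sqrt c\,m^{1/2}n^{3/4}$. Thus the incidence hypothesis of Theorem~\ref{main1} holds with the fixed constant $c'=\tfrac{1}{2}\sqrt c$, and the coplanarity hypothesis is inherited verbatim. The remaining task is to verify the size window $n_0<n^{1/2}\le m\le\delta n$ for the constants $n_0=n_0(k,c')$ and $\delta=\delta(k,c')$ supplied by Theorem~\ref{main1}. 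The lower end is easy: $n^{1/2}>n_0$ for $n$ large since $n_0$ is fixed, and $r\le n^{1/2}/\log n$ gives $m\ge c\,n^{3/2}/r^2\ge c\,n^{1/2}\log^2 n\ge n^{1/2}$ once $n$ is large.

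For the upper end $m\le\delta n$ I would invoke the Guth--Katz rich-point estimate \eqref{guthkatzrich}: since $3\le r\le n^{1/2}$, it gives $m=|P_r(L)|=O(n^{3/2}/r^2)$, and then $r\ge n^{1/4}\log n$ forces $m=O(n/\log^2 n)=o(n)$, so $m<\delta n$ for all large $n$. At that point Theorem~\ref{main1} produces a $k$-clique of $(P_r(L),L)$ in general position, the desired contradiction; hence no such $c$ exists and $|P_r(L)|=o(n^{3/2}/r^2)$. The same reasoning works with $\log n$ replaced by any function tending to infinity, which is why the theorem allows this.

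The proof should be short, so there is no single ``hard part'' of the usual kind; the delicate points are in the bookkeeping. The most important is that one genuinely needs the Guth--Katz bound \eqref{guthkatzrich}, not merely the trivial double-counting estimate $|P_r(L)|=O(n^2/r^2)$: the latter only yields $m=O(n^{3/2}/\log^2 n)$, which is much larger than $n$ and would fail the condition $m\le\delta n$ of Theorem~\ref{main1}. Second, the slack $\log n$ factors in the permitted range of $r$ must be tracked carefully, since they are exactly what places $m$ strictly between $n^{1/2}$ and $\delta n$. Finally, because the argument rules out every constant $c>0$ (for $n$ large depending on $c$), the conclusion is the little-$o$ statement rather than an improvement of the constant in \eqref{guthkatzrich}.
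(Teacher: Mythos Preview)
Your proposal is correct and follows essentially the same route as the paper: the paper argues directly from Theorem~\ref{main} (the common generalization of Theorem~\ref{main1}) rather than from Theorem~\ref{main1} itself, and phrases the argument as ``for every $c>0$ we eventually have $m<c^2 n^{3/2}/r^2$'' rather than as a contradiction, but the substance---the incidence count $|I(P_r(L),L)|\ge rm$, the use of \eqref{guthkatzrich} together with $r\ge n^{1/4}\log n$ to force $m=o(n)$, and the observation that $r\le n^{1/2}/\log n$ handles the lower end of the size window---is identical.
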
 The upper bound \eqref{guthkatzrich} is asymptotically tight for $2\leq r \ll |L|^{1/2}$ (see Example~3 in \cite{guth2015erdHos}). A probabilistic argument on the extremal constructions towards \eqref{guthkatzrich} can give us line sets $L$ satisfying the hypothesis of Theorem~\ref{main2} for a slightly smaller range of $r$, but with $|P_r(L)| \gg n^{3/2 - O(1/k)}/r^2$. We omit the proof as it's similar to that of Theorem~\ref{lowerbound3d}.

The rest of this paper is organized as follows. In Section~\ref{sec_pre}, we present notations and results about the regularity method and polynomial partitioning. In Section~\ref{sec_upper}, we prove Theorem~\ref{main1} and Theorem~\ref{main2} using tools in the previous section and the so-called same-type lemma by B{\'a}r{\'a}ny and Valtr~\cite{barany1998positive}. Section~\ref{sec_lower} is devoted to the lower bound constructions stated in Theorem~\ref{lowerbound2d} and Theorem~\ref{lowerbound3d}. We discuss related open problems in Section~\ref{sec_remark}. We omit floors and ceilings whenever they are not crucial for the sake of clarity in our presentation. The functions $\exp(x)$ and $\log(x)$ are both in base $e$.

\section{Preliminaries}\label{sec_pre}

    We shall use Szemer\'edi's regularity lemma and graph counting lemma in our proof. We briefly list notations and statements of them, and the readers are referred to \cite{zhao2023graph} for a detailed introduction. Let $X$ and $Y$ be sets of vertices in a graph $G$.  Then the \textit{density} of the pair $(X,Y)$ is defined as\begin{equation*}
        d(X,Y) = \dfrac{|\{(x,y)\in X\times Y ~|~ \{x,y\}\in E(G)\}|}{|X|\cdot|Y|}.
    \end{equation*}
    We say that $(X,Y)$ is an \textit{$\epsilon$-regular pair} if for all $A\subset X$ and $B\subset Y$ with $|A|\geq \epsilon|X|$ and $|B|\geq \epsilon|Y|$, we have $|d(X,Y) - d(A,B)|\leq \epsilon$. A partition $\mathcal{P}=\{V_1,V_2,\dots,V_k\}$ of the vertex set of $G$ is said to be an \textit{$\epsilon$-partition} if\begin{equation*}
        \sum_{\substack{1\leq i,j\leq k\\ (V_i,V_j)\text{ not $\epsilon$-regular}}} |V_i|\cdot|V_j| \leq \epsilon |V(G)|^2.
    \end{equation*} Moreover, a partition $\mathcal{P}$ is said to be \textit{equitable} if all part sizes are within one of each other. We have the following two lemmas.
    \begin{lemma}[Theorem~2.1.17 in \cite{zhao2023graph}]\label{reglem}
         For all $\epsilon > 0$ and $M_0$, there exists a constant $M$ such that every graph has an $\epsilon$-regular equitable partition of its vertex set into at least $M_0$ and at most $M$ parts.
     \end{lemma}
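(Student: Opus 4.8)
The plan is to prove Lemma~\ref{reglem} by the classical energy-increment argument. Write $n = |V(G)|$ and, for a partition $\mathcal{P} = \{V_1, \dots, V_k\}$ of $V(G)$, define its \emph{energy} (mean-square density)
\begin{equation*}
  q(\mathcal{P}) = \sum_{i=1}^{k} \sum_{j=1}^{k} \frac{|V_i|\,|V_j|}{n^2}\, d(V_i, V_j)^2 ,
\end{equation*}
which lies in $[0,1]$ because each $d(V_i,V_j)\in[0,1]$ and the weights sum to $1$. The argument hinges on three facts. \emph{(i) Monotonicity:} if $\mathcal{P}'$ refines $\mathcal{P}$, then $q(\mathcal{P}') \ge q(\mathcal{P})$; this is a blockwise Cauchy--Schwarz computation, since $d(V_i,V_j)$ is a weighted average of the densities of the sub-blocks into which $(V_i,V_j)$ is split. \emph{(ii) Energy boost from irregularity:} if a pair $(X,Y)$ fails to be $\epsilon$-regular, witnessed by $A\subseteq X$, $B\subseteq Y$ with $|A|\ge\epsilon|X|$, $|B|\ge\epsilon|Y|$ and $|d(A,B)-d(X,Y)|>\epsilon$, then replacing $X$ by $\{A, X\setminus A\}$ and $Y$ by $\{B, Y\setminus B\}$ raises the contribution of this pair to the energy by at least $\epsilon^4\cdot\frac{|X|\,|Y|}{n^2}$ (a defect form of Cauchy--Schwarz). \emph{(iii) Increment step:} combining (i) and (ii), if an equitable partition $\mathcal{P}$ into $k$ parts is not an $\epsilon$-partition, then subdividing all irregular pairs at once produces a refinement into at most $k\cdot 4^{k}$ parts whose energy exceeds $q(\mathcal{P})$ by at least $\epsilon^{5}$, the extra factor of $\epsilon$ coming from the fact that irregular pairs carry at least an $\epsilon$-fraction of the total weight $\sum|V_i||V_j|=n^2$.

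Given these, I would argue as follows. Start from an arbitrary equitable partition $\mathcal{P}_0$ into $M_0$ parts (if $n<M_0$ the statement is trivial, taking singletons). Iterate: if $\mathcal{P}_t$ is an $\epsilon$-partition, stop; otherwise pass to the refinement $\mathcal{P}_{t+1}$ from step (iii). Since $0\le q(\mathcal{P}_t)\le 1$ and each step gains at least $\epsilon^5$ in energy, the process terminates after at most $\epsilon^{-5}$ steps. The number of parts therefore never exceeds the constant $M = M(\epsilon, M_0)$ obtained by iterating $k\mapsto k\cdot 4^{k}$ at most $\epsilon^{-5}$ times from $M_0$ (a tower-type bound). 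The resulting partition is $\epsilon$-regular and equitable; it has at least $M_0$ parts because no step ever merges parts, and at most $M$ parts by the bound just described.

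The one genuinely technical point is maintaining equitability at every step. Rather than literally splitting each part $X$ into $\{A, X\setminus A\}$, one first cuts every part of $\mathcal{P}_t$ into blocks of a common small size, fine enough that each witnessing set $A$ (resp.\ $B$) agrees with a union of such blocks up to a negligible remainder; the accumulated slivers are gathered into a single exceptional set and redistributed among the blocks. This costs only a bounded multiplicative factor in the number of parts and a negligible loss in the energy estimate, so the increment per step remains at least, say, $\epsilon^{5}/2$, and the iteration count and final bound $M$ change only by absolute constants. I expect this bookkeeping --- controlling the exceptional set and checking that the energy boost survives the rounding --- to be the main obstacle, whereas the energy-increment mechanism itself is short once the defect Cauchy--Schwarz inequality of step (ii) is in place. (Alternatively, since this is Theorem~2.1.17 of \cite{zhao2023graph}, one may simply cite it; the sketch above is how I would reconstruct the proof.)
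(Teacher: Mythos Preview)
Your sketch is the standard energy-increment proof and is correct in outline; note, however, that the paper does not prove Lemma~\ref{reglem} at all---it is quoted verbatim as Theorem~2.1.17 of \cite{zhao2023graph} and used as a black box. So there is no ``paper's own proof'' to compare against; your final parenthetical (``one may simply cite it'') is exactly what the authors do.
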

    \begin{lemma}[Theorem~2.6.4 in \cite{zhao2023graph}]\label{graphcount}
        For every integer $k \geq 1$ and real $\epsilon > 0$, let $G$ be a graph and $V_1,V_2,\dots,V_k$ be pairwise disjoint subsets of $V(G)$. Suppose for each $1\leq i<j\leq k$, $(V_i,V_j)$ is an $\epsilon$-regular pair with density $d(V_i,V_j) \geq (k+1)\epsilon^{1/k}$, then the number of $k$-cliques $v_1,v_2,\dots,v_k$ inside $G$ with $v_i\in V_i$ for each $i$ is at least\begin{equation*}
            (1-k\epsilon) \cdot \prod_{1\leq i<j\leq k} \left(d(V_i,V_j) - k\epsilon^{\frac{1}{k}}\right) \cdot \prod_{1\leq i\leq k} |V_i|.
        \end{equation*}
     \end{lemma}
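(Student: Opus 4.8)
\emph{Proof proposal.} The plan is to prove Lemma~\ref{graphcount} by induction on $k$, via the standard neighborhood-peeling argument for the counting lemma. The only input is one elementary fact about an $\epsilon$-regular pair $(X,Y)$ of density $d$: for every $Y'\subseteq Y$ with $|Y'|\geq\epsilon|Y|$, all but at most $\epsilon|X|$ vertices $x\in X$ satisfy $|N(x)\cap Y'|\geq (d-3\epsilon)|Y'|$ (and with $Y'=Y$ the deficit improves to $\epsilon$); this follows in one line by feeding $Y'$ and the set of ``low-degree'' vertices of $X$ into the $\epsilon$-regularity condition. Note also that $d_{ij}=d(V_i,V_j)\geq (k+1)\epsilon^{1/k}$ forces $\epsilon^{1/k}\leq 1/(k+1)$, so $\epsilon$ is automatically small; this makes inequalities such as $3\epsilon\leq\epsilon^{1/k}$ available for free in the inductive step.

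The base case $k=1$ is immediate. For the inductive step I would count $k$-cliques by first choosing the vertex $v_1\in V_1$: call $v_1$ \emph{good} if $|N(v_1)\cap V_j|\geq (d_{1j}-\epsilon)|V_j|$ for every $j=2,\dots,k$, so that by the fact above at least $(1-(k-1)\epsilon)|V_1|$ vertices are good. For a good $v_1$, writing $W_j=N(v_1)\cap V_j$, the $k$-cliques through $v_1$ are exactly the $(k-1)$-cliques of the system $(W_2,\dots,W_k)$, to which I apply the inductive hypothesis; summing over good $v_1$ then gives the desired count. Crucially, each ``goodness'' test used anywhere in the recursion is an application of the displayed fact to one of the \emph{original} regular pairs $(V_i,V_j)$, with the $V_j$-side replaced by the subset currently under consideration --- so no inheritance of regularity is needed, only that every such subset remains at least an $\epsilon$-fraction of the corresponding $V_j$.

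This last point is exactly where the density hypothesis is used. Each peeling multiplies the relative size of a surviving subset by at least $d-3\epsilon\geq k\epsilon^{1/k}\geq\epsilon^{1/k}$ for the relevant density $d$, so after $i\leq k$ peelings every surviving subset has relative size at least $\epsilon^{i/k}\geq\epsilon$; thus the displayed fact stays applicable through all $k$ levels, and the threshold $(k+1)\epsilon^{1/k}$ is precisely what makes this hold. Tracking the recursion, one collects one factor $|V_i|$ for each part and one factor (close to) $d_{ij}$ for each pair $i<j$ --- the latter because the edge $\{i,j\}$ is ``paid for'' exactly when the later of $v_i,v_j$ is chosen inside the neighborhood of the earlier --- while the accumulated $O(\epsilon)$-type error terms consolidate into the density deficit $k\epsilon^{1/k}$ and the global loss factor $1-k\epsilon$. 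To make the induction telescope cleanly it helps to state the $\ell$-part inductive hypothesis with its regularity parameter and density threshold rescaled by a power of $\epsilon^{1/k}$, so that the $\ell$-part instance literally implies the $(\ell-1)$-part one after a single peel.

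The routine but delicate part, and the main obstacle, is precisely this bookkeeping: one must check round by round that (i) the small deficits introduced in the surviving densities aggregate to the single clean bound $k\epsilon^{1/k}$, (ii) every survivor stays above an $\epsilon$-fraction of its ambient part (using $d_{ij}\geq (k+1)\epsilon^{1/k}$ once more), and (iii) the product of the per-level vertex-loss factors is at least $1-k\epsilon$. None of these steps is conceptually difficult, but they are where all the quantitative content --- the exponent $1/k$, the constant $k+1$, the shape of the error terms --- is pinned down, so the inductive statement has to be set up with exactly the right slack.
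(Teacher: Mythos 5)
The paper does not prove this lemma; it is cited as a black box (Theorem~2.6.4 in Zhao's \emph{Graph Theory and Additive Combinatorics}), so there is no in-paper argument to compare yours against. Your sketch is the standard greedy embedding (``peeling'') proof, which is indeed how the cited result is proved, and you correctly flag the two structural points that make it work: each regularity application is made to an \emph{original} pair $(V_i,V_j)$ with a current candidate subset on the $V_j$-side, and the hypothesis $d_{ij}\geq (k+1)\epsilon^{1/k}$ is exactly what keeps those candidate subsets above an $\epsilon$-fraction through all $k$ rounds.

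The genuine gap is in the step you label ``routine but delicate.'' It is not just a matter of error terms ``consolidating'' into $1-k\epsilon$ and $k\epsilon^{1/k}$. Concretely, at step $i$ the number of good choices for $v_i$ is about $|V_i^{(i-1)}|-(k-i)\epsilon|V_i|$, and the relative loss $(k-i)\epsilon|V_i|/|V_i^{(i-1)}|$ can be as large as order $\epsilon^{2/k}$ (since $|V_i^{(i-1)}|$ may be as small as $\epsilon^{(i-1)/k}|V_i|$); summing these over $i$ gives something far larger than $k\epsilon$. The bound as stated is nevertheless correct, but establishing it requires trading that excess against the substantial slack between $\prod_{i<j}(d_{ij}-\epsilon)$ (what the recursion naturally produces) and the weaker target $\prod_{i<j}(d_{ij}-k\epsilon^{1/k})$; this is a two-sided comparison, not a one-sided aggregation of small errors. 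Similarly, the ``inductive hypothesis rescaled by a power of $\epsilon^{1/k}$'' you invoke to make the recursion close is left entirely unspecified, and setting it up correctly is exactly where the constants $(k+1)\epsilon^{1/k}$, $k\epsilon^{1/k}$, and $1-k\epsilon$ are pinned down. As written, this is a correct plan with the quantitative core — which is the whole content of the statement — still missing.
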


     Another ingredient in our proof is the polynomial partitioning lemma, see e.g.,~Theorem~10.3 in \cite{guth2016polynomial}.
    \begin{lemma}\label{polypar}
        For a finite point set $P\subset \mathbb{R}^3$ and a positive integer $D$, there exists a nonzero polynomial $f\in \mathbb{R}[x,y,z]$ of degree at most $D$ such that $\mathbb{R}^3 \setminus Z(f)$ is a disjoint union of $O(D^3)$ open sets each containing $O(|P|/D^3)$ points of $P$.
    \end{lemma}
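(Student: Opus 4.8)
The plan is to deduce this from iterated applications of the polynomial ham sandwich theorem, following Guth and Katz. Recall the latter: for any finite point sets $Q_1,\dots,Q_N\subset\mathbb{R}^3$ there is a nonzero polynomial $g\in\mathbb{R}[x,y,z]$ of degree $O(N^{1/3})$ that simultaneously bisects all of them, meaning $|Q_i\cap\{g>0\}|\le|Q_i|/2$ and $|Q_i\cap\{g<0\}|\le|Q_i|/2$ for each $i$. This is a standard consequence of the Borsuk--Ulam theorem, using that polynomials of degree at most $d$ in three variables form a vector space of dimension $\binom{d+3}{3}$, so that $\binom{d+3}{3}-1\ge N$ once $d=O(N^{1/3})$.

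First I would run the bisection iteration. At stage $0$ the single set $P$ sits in the single region $\mathbb{R}^3$. Suppose after stage $j$ we have polynomials $g_1,\dots,g_j$ such that the sign patterns of $(g_1,\dots,g_j)$ cut $\mathbb{R}^3\setminus Z(g_1\cdots g_j)$ into at most $2^j$ open regions, each meeting $P$ in at most $|P|/2^j$ points (points that have landed on some $Z(g_i)$ are simply discarded, as they will lie on the final zero set). At stage $j+1$, apply the polynomial ham sandwich theorem to the at most $2^j$ point sets obtained by intersecting $P$ with the current regions, getting one polynomial $g_{j+1}$ of degree $O(2^{j/3})$ that bisects all of them at once; refining each region by the sign of $g_{j+1}$ produces at most $2^{j+1}$ open regions, each meeting $P$ in at most $|P|/2^{j+1}$ points.

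Next I would tally the degree of $f:=g_1g_2\cdots g_j$: one has $\deg f=\sum_{i=1}^{j}O\big(2^{(i-1)/3}\big)=O(2^{j/3})$, the geometric sum being dominated by its last term. Taking $j$ to be the largest integer for which this bound still guarantees $\deg f\le D$ gives $2^j=\Theta(D^3)$, so that $\mathbb{R}^3\setminus Z(f)$ decomposes into $O(D^3)$ open sign-pattern regions, each containing $O(|P|/D^3)$ points of $P$ --- which is the assertion. If one prefers the pieces to be connected, split each sign-pattern region into its connected components: by the Milnor--Thom (Oleinik--Petrovsky) bound, $\mathbb{R}^3\setminus Z(f)$ has $O((\deg f)^3)=O(D^3)$ connected components altogether, each inheriting the point bound of its region.

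The content is entirely in the bookkeeping rather than in any single hard step. The two things to get right are that every new polynomial is chosen to bisect \emph{all} the current regions at once --- so the point count halves uniformly while the total degree only grows geometrically --- and the degree bound $O(N^{1/3})$ in the polynomial ham sandwich theorem, which is exactly what makes the number of regions scale like $D^3$. I expect the proof of that degree bound (Borsuk--Ulam together with the dimension count for polynomials) to be the only genuinely nontrivial ingredient, and it is completely standard.
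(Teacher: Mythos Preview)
Your argument is correct and is exactly the standard Guth--Katz proof of polynomial partitioning. The paper does not supply its own proof of this lemma at all; it simply cites Theorem~10.3 in Guth's book \cite{guth2016polynomial}, whose proof is precisely the iterated polynomial ham sandwich argument you have written out.
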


    We need the following estimate of incidences on algebraic surfaces essentially due to Guth and Katz. We shall briefly explain how to obtain this bound by piecing together arguments scattered in the proof of \eqref{guthkatz} in \cite{guth2015erdHos}.
    \begin{proposition}\label{algebraic_incidence_estimate}
        Suppose $f\in \mathbb{R}[x,y,z]$ is a polynomial of degree $D$, $L$ is a set of $n$ lines in $\mathbb{R}^3$ with at most $B$ of its members on a common plane, and $P$ is a set on $m$ points on the variety $Z(f)$. Then we have \begin{equation*}
        |I(P,L)| < O\left(Dn+ B^{\frac{1}{3}}m^{\frac{2}{3}}n^{\frac{1}{3}}+D^{\frac{2}{3}}B^{\frac{1}{3}}m^{\frac{2}{3}}+ m +D^{\frac{3}{2}}m^{\frac{1}{2}} + D^2\right).
        \end{equation*}
    \end{proposition}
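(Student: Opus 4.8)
The plan is to follow the Guth--Katz template of splitting $L$ according to whether a line lies on the variety $Z(f)$. We may first replace $f$ by its radical, which keeps $Z(f)$ and keeps $\deg f\le D$. If $\ell\in L$ is not contained in $Z(f)$, then $f$ restricted to $\ell$ is a nonzero univariate polynomial of degree at most $D$, so $\ell$ meets $Z(f)\supseteq P$ in at most $D$ points; summing over such lines contributes $O(Dn)$. Discarding also the points of $P$ lying on at most two lines of $L$ and the lines meeting $P$ in at most two points contributes $O(m)$ more (we may assume $D\ge 2$, the case $D=1$ being immediate from planar Szemer\'edi--Trotter). Hence it suffices to bound the incidences between the points of $P$ lying on at least three lines of $L$ and the lines of $L$ lying on $Z(f)$.

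Write $Z(f)=\bigcup_{i=1}^{t}Z_i$ with $Z_i$ irreducible of degree $D_i$, so $t\le D$ and $\sum_i D_i\le D$. A point lying on two or more components lies on the singular locus of $Z(f)$, a curve of degree $O(D^2)$, and such points get absorbed into the curve estimates below. Every other relevant point lies on a unique $Z_i$, and every line through it lies on that same $Z_i$, so the remaining incidences split as a sum over $i$ with the point sets and line sets pairwise disjoint; write $m_i,n_i$ for the corresponding counts, $\sum m_i\le m$, $\sum n_i\le n$. When $Z_i$ is a plane we have $n_i\le B$ by hypothesis, and planar Szemer\'edi--Trotter gives $O(m_i^{2/3}n_i^{2/3}+m_i+n_i)=O(B^{1/3}m_i^{2/3}n_i^{1/3}+m_i+n_i)$ incidences on $Z_i$; summing over the at most $D$ planar components and applying H\"older's inequality with exponents $3/2$ and $3$ yields $O(B^{1/3}m^{2/3}n^{1/3}+m+n)$, which fits within the claimed bound (with $n$ absorbed by $Dn$).

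For a non-planar irreducible component $Z_i$ of degree $D_i\ge 2$, I would invoke the flat-point analysis of Guth and Katz: a point of $Z_i$ incident to three or more lines lying on $Z_i$ is either a singular point of $Z_i$ or a flat point (its tangent plane meeting $Z_i$ to order at least three there), and therefore lies on an algebraic curve $\Gamma_i\subset Z_i$ of degree $O(D_i^2)$ --- the singular locus has degree $O(D_i^2)$, and since a non-planar irreducible surface is not flat at a Zariski-dense set of its points, the flat locus is cut on $Z_i$ by an auxiliary polynomial of degree $O(D_i)$ not vanishing on $Z_i$. Thus, apart from a bounded number of cone apices (each incident to at most $n$ lines, which is within the $O(Dn)$ budget already spent), the $\ge 3$-rich points on $Z_i$ are confined to $\Gamma_i$, and each line of $L$ on $Z_i$ either is one of the $O(D_i^2)$ line-components of $\Gamma_i$ or meets $\Gamma_i$ --- hence carries all of its own $\ge 3$-rich points --- in only $O(D_i^2)$ points. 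Combining this with the classical Cayley--Salmon--Segre bound that a non-ruled surface of degree $D_i$ carries $O(D_i^2)$ lines, the $K_{2,2}$-freeness of the point--line incidence graph (two points span at most one line), a Cauchy--Schwarz estimate, a dyadic partition of the points by their number of incident lines, and a direct treatment of the ruled components (whose line count need not be finite), and then summing over $i$ via $\sum_i D_i\le D$, one recovers the remaining terms $O(D^{2/3}B^{1/3}m^{2/3}+D^{3/2}m^{1/2}+D^2)$.

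The delicate part is this last step: correctly merging the flat/singular dichotomy with the nineteenth-century results on lines on surfaces, giving a uniform treatment of ruled components and cone apices, keeping the points lying on several components under control, and organizing the dyadic summation over richness levels so that no lossy factor (in particular nothing of the form $D^2n$) survives. This is precisely the material that is distributed across several sections of \cite{guth2015erdHos}; isolating and recombining it into the single statement above is where the care is needed, while everything else is routine bookkeeping.
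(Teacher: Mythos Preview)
Your outline follows the same Guth--Katz template as the paper's sketch (split off lines not in $Z(f)$ for $O(Dn)$; handle planar pieces by planar Szemer\'edi--Trotter to get the $B^{1/3}m^{2/3}n^{1/3}$ term; use the flat/critical-point dichotomy for the rest), so the overall architecture is the same. The one genuine difference in execution is how the residual ``special'' lines are dispatched. You decompose $Z(f)$ into irreducible components and propose to redo the nineteenth-century ruled-surface analysis on each non-planar piece to extract $D^{2/3}B^{1/3}m^{2/3}+D^{3/2}m^{1/2}+D^2$ directly. The paper instead observes that the lines which are entirely flat or critical but not contained in an algebraic plane, together with the lines lying in two or more algebraic planes, number only $O(D^2)$ in total (Proposition~12.9 in \cite{guth2016polynomial} plus the trivial bound $\binom{D}{2}$), and then simply applies the already-established Guth--Katz bound~\eqref{guthkatz} to $m$ points and these $O(D^2)$ lines to read off exactly those three terms in one line. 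This black-box reuse of~\eqref{guthkatz} is the shortcut that lets the paper avoid the component-by-component ruled/non-ruled case analysis you flag as ``the delicate part''; your route is not wrong, but it reopens precisely the book-keeping (cone apices, doubly-ruled quadrics, lines shared between components, the dyadic richness sum) that the paper sidesteps by quoting~\eqref{guthkatz}.
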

    \begin{proof}[Proof Sketch]
             Let $L_{alg}\subset L$ be the lines contained in $Z(f)$. Since any line not contained in $Z(f)$ has at most $D$ intersections with $Z(f)$, we have\begin{equation*}
                 |I(P, L\setminus L_{alg})| \leq Dn.
             \end{equation*} As a consequence, it suffices for us to estimate $I(P, L_{alg})$.
    
             We call a plane that is a subset of $Z(f)$ an \textit{algebraic plane}. Let $L_{lin}\subset L_{alg}$ be the lines contained in at least one algebraic plane, and $L_{uni}\subset L_{lin}$ be the lines contained in exactly one algebraic plane. One can apply \eqref{szemereditrotter} on each algebraic plane to obtain the following bound (see Lemma~12.6 in \cite{guth2016polynomial} for details).
             \begin{equation*}
                 |I(P, L_{uni})| < O\left(Dn + B^{\frac{1}{3}}m^{\frac{2}{3}}n^{\frac{1}{3}}\right).
             \end{equation*}
    
             Let $P' \subset P$ be the points that are either \textit{flat} or \textit{critical} on $Z(f)$, and $L'\subset L_{alg}$ be the lines consisting of flat or critical points on $Z(f)$. The readers are referred to Chapter~11 in \cite{guth2016polynomial} for the definition and properties of flat points. Using these special properties, we have the following bounds (see Lemma~12.6 in \cite{guth2016polynomial} for details).
             \begin{equation*}
                 |I(P \setminus P', L_{alg}\setminus L')| \leq 2m \quad\text{and}\quad |I(P', L_{alg} \setminus L')| \leq 3Dn.
             \end{equation*}
    
             Moreover, Proposition~12.9 in \cite{guth2016polynomial} states that $|L' \setminus L_{lin}| \leq 4D^2$. There are at most $D$ algebraic planes in $Z(f)$ and two planes determine at most one line, so we have $|L_{lin}\setminus L_{uni}| \leq D^2$. Therefore, we can apply the Guth--Katz bound \eqref{guthkatz} to obtain\begin{equation*}
                 |I(P, L' \setminus L_{lin})| + |I(P,L_{lin}\setminus L_{uni}))| < O\left(m^{\frac{1}{2}}D^{\frac{3}{2}} + B^{\frac{1}{3}}m^{\frac{2}{3}}D^{\frac{2}{3}}+m+D^2\right).
             \end{equation*}
    
            Finally, we can break down $|I(P, L)|$ as follows.\begin{align*}
                 |I(P,L)| = |I(P, L\setminus L_{alg})|+ |I(P, L_{alg}\setminus L')| &+ |I(P, L'\setminus L_{lin})| + |I(P , L_{lin}\setminus L_{uni})| +|I(P , L_{uni})|\\
                 = |I(P, L\setminus L_{alg})|+ |I(P\setminus P', L_{alg}\setminus L')| &+ |I(P', L_{alg}\setminus L')|\\
                 &\hspace{-2em}+ |I(P, L'\setminus L_{lin})| + |I(P , L_{lin}\setminus L_{uni})| +|I(P , L_{uni})|.
             \end{align*} And we conclude the proof by combining our identities above.
         \end{proof}

\section{Proofs of Theorem~\ref{main1} and Theorem~\ref{main2}}\label{sec_upper}

We prove the following more general statement which implies Theorem~\ref{main1} and Theorem~\ref{main2}.
\begin{theorem}\label{main}
    For every integer $k \geq 4$ and real $c > 0$, there exists a constant $\delta = \delta(k,c)$ such that if an arrangement $(P,L)$ of $m$ points and $n$ lines in $\mathbb{R}^3$ satisfies: $n^{1/2} \leq m \leq n$; $|I(P,L)| > c m^{1/2}n^{3/4}$; any plane contains at most $\delta m^{-1}n^{3/2}$ lines in $L$, then $(P,L)$ contains $\Omega\left((m^{-1/2}n^{3/4})^k\right)$ many $k$-cliques in general position. Here, the constant hidden in the $\Omega$-notation depends on $k$ and $c$.
\end{theorem}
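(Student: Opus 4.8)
The plan is to reduce, via polynomial partitioning, to a dense sub-arrangement in the ``balanced'' regime, where the natural graph becomes dense enough for the regularity method, and then to read off the cliques there with the help of the same-type lemma. Throughout, write $\rho = m^{-1/2}n^{3/4}$, so that the goal is $\Omega(\rho^k)$ cliques in general position, and observe the coincidence $\rho^2 = m^{-1}n^{3/2}$, which matches the scale of the allowed coplanar bound $\delta m^{-1}n^{3/2}$. The $k$-cliques of $(P,L)$ are exactly the $k$-cliques of the \emph{collinearity graph} $G$ on $P$, where $p \sim q$ iff $\overline{pq} \in L$; a short computation shows $G$ has average degree $\Theta(n^{1/2})$, so it is too sparse for Lemma~\ref{reglem} once $m \gg n^{1/2}$, which is exactly why a preliminary reduction is needed.

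First I would apply Lemma~\ref{polypar} at the critical degree $D = \Theta_c(m^{1/2}n^{-1/4}) \ge 1$. Feeding $B = \delta m^{-1}n^{3/2}$ into Proposition~\ref{algebraic_incidence_estimate} and using $m \le n$, one checks that for $\delta$ small enough (in terms of $c$) every term there is at most $\tfrac{c}{4}m^{1/2}n^{3/4}$, so at least $\tfrac{c}{2}m^{1/2}n^{3/4}$ incidences have their point in an open cell; an averaging argument over the $O(D^3)$ cells then yields one cell $\Delta$ whose sub-arrangement $(P',L') = (P \cap \Delta,\ \{\ell \in L : \ell \cap \Delta \neq \emptyset\})$ has $m' = |P'| = O_c(\rho)$, $n' = |L'| = O_c(\rho^2)$, and $|I(P',L')| = \Omega_c(\rho^2)$, and still has at most $\delta\rho^2$ lines in a common plane. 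Thus $(P',L')$ is again extremal for \eqref{guthkatz}, but now in the balanced regime $m' \asymp (n')^{1/2} \asymp \rho$. Next I would regularize: after deleting the few lines whose richness exceeds a large constant multiple of the average and the few planes carrying a positive fraction of the lines (the incidences lost being subcritical by \eqref{szemereditrotter} inside those planes), I prune so that surviving lines and points have richness within a constant factor of their averages, keeping $\Omega_c(\rho^2)$ incidences --- arranged without logarithmic loss by discarding only below or above a fixed multiple of the average. In the balanced regime the collinearity graph of what remains lives on $\Theta(\rho)$ vertices and has edge density $\Omega(1)$.

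Now I would run the regularity machine. Apply Lemma~\ref{reglem} with a small $\epsilon = \epsilon(k) > 0$ and a large $M_0$, and extract, by an averaging argument on the reduced graph, $k$ parts $V_1, \dots, V_k$, each of size $\Theta(\rho)$, pairwise $\epsilon$-regular with density at least some $d_0 = d_0(k) > 0$. Apply the same-type lemma of B\'ar\'any and Valtr to $V_1, \dots, V_k \subset \mathbb{R}^3$ to pass to $V_i' \subseteq V_i$ with $|V_i'| \ge c_0 |V_i|$, $c_0 = c_0(k) > 0$, so that every transversal of $(V_1', \dots, V_k')$ has the same order type; choosing $\epsilon$ small in terms of $k, c_0, d_0$ keeps each pair $(V_i', V_j')$ regular enough and of density bounded below for Lemma~\ref{graphcount}, which then produces $\Omega\!\big(\prod_i |V_i'|\big) = \Omega(\rho^k) = \Omega\!\big((m^{-1/2}n^{3/4})^k\big)$ transversal $k$-cliques of $(P,L)$. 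Finally, since all transversals share one order type, either each of these cliques is in general position --- and we are done --- or each carries a fixed degeneracy, say four prescribed positions are always coplanar; in that case a propagation argument (using that every surviving line has only $O(1)$ points while $|V_i'| = \Omega(\rho)$) forces $V_1' \cup V_2' \cup V_3' \cup V_4'$ into one plane $\pi$, and then the $\Omega(1)$ density between $V_1'$ and $V_2'$ exhibits $\Omega(\rho^2)$ distinct lines of $L$ in $\pi$, contradicting the bound $\delta\rho^2$ once $\delta$ is below the implied constant. Hence all the cliques found are in general position.

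The step I expect to be the main obstacle is the descent together with the plane bookkeeping. The partition scale $D \asymp m^{1/2}n^{-1/4}$ is dictated by the demand that the cell be balanced, and one must simultaneously keep the incidences on $Z(f)$ subcritical (forcing $\delta$ small in Proposition~\ref{algebraic_incidence_estimate}), prevent the selected cell from being \emph{planar-dominated} --- a cell in which one plane carries almost all its lines would make every large clique coplanar and collapse the argument --- and check that the inherited coplanar bound $\delta\rho^2$ leaves enough slack to close the general-position step. It is exactly here that the hypotheses ``$\delta$ small'' and ``$m$ small relative to $n$'' (through $1 \le \rho^2 = m^{-1}n^{3/2} \le n$) enter, and getting all the constants to line up, together with handling the heavy planes removed during regularization, is the technical heart of the proof.
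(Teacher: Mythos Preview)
Your overall plan matches the paper's exactly: polynomial partition at degree $D\asymp m^{1/2}n^{-1/4}$, control the algebraic incidences via Proposition~\ref{algebraic_incidence_estimate}, pigeonhole to a dense cell with $N=\Theta(\rho)$ points and $\Omega(\rho^2)$ incidences, build the collinearity graph on lines of bounded richness, apply Lemma~\ref{reglem}, extract $k$ pairwise regular dense parts, then finish with Lemma~\ref{graphcount}. The divergence, and the genuine gap, is in how you obtain general position for the transversals.

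You invoke the B\'ar\'any--Valtr same-type lemma as a black box and then argue that a degenerate common order type leads to a contradiction. Two problems. First, the standard same-type lemma (Theorem~9.3.1 in Matou\v{s}ek) assumes $V_1\cup\dots\cup V_k$ is already in general position, which is exactly the property in question. Second, and more seriously, your ``propagation argument'' from ``every transversal $(v_1,v_2,v_3,v_4)$ is coplanar'' to ``$V_1'\cup V_2'\cup V_3'\cup V_4'$ lies in a single plane'' does not follow from the hint you give. Bounded richness of lines in $L$ only prevents many points on a line \emph{of $L$}; it says nothing about an arbitrary line. If, say, $V_4'$ happens to lie on a line $\ell\notin L$, then ``all $4$-transversals coplanar'' merely forces each non-collinear triple $(v_1,v_2,v_3)$ into \emph{some} plane through $\ell$, not into one fixed plane, and your $\Omega(\rho^2)$-lines-in-one-plane contradiction never fires. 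Ruling out such collinear $V_i'$ (and the nested degeneracies it spawns) requires a genuine argument you have not supplied.

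The paper resolves this by \emph{not} using the same-type lemma as a black box. It re-runs the Ham--Sandwich iteration underlying the proof (Claim~\ref{sametypeclaim}) and feeds the coplanar bound and the $\epsilon$-regularity in \emph{during} the construction: if the bisecting plane $\pi$ at some step contained a $\tfrac{1}{6K}$-fraction of two of the current sets, regularity would still give density $\Omega(1)$ between those two large coplanar subsets, hence $\Omega(\rho^2)$ distinct $L$-lines in $\pi$, contradicting the $\delta\rho^2$ hypothesis. Thus at most one set has many points on $\pi$, and one can perturb $\pi$ slightly to obtain strict separation. This is precisely where the smallness of $\delta$ enters, and weaving it into the same-type iteration, rather than appealing to a post-hoc propagation, is the idea your outline is missing.
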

\begin{proof}
     First, we apply Lemma~\ref{polypar} to the point set $P$ with degree $D = c_1 m^{1/2}n^{-1/4}$, where $c_1 = c_1(k,c)$ is a constant we shall determine later. This gives us a polynomial $f$ of degree $D$ such that $\mathbb{R}^3\setminus Z(f)$ is a disjoint union of at most $O(D^3)$ open cells, denoted as $O_i$, and each $O_i$ contains at most $O(m/D^3)$ points from $P$. Then we apply Proposition~\ref{algebraic_incidence_estimate} and the hypothesis $n^{\frac{1}{2}}\leq m \leq n$ to the point set $P \cap Z(f)$, and this gives us \begin{align*}
         |I(P \cap Z(f),L)| &< O\left(Dn+ \left(\delta m^{-1}n^{3/2}\right)^{\frac{1}{3}}m^{\frac{2}{3}}n^{\frac{1}{3}}+D^{\frac{2}{3}}\left(\delta m^{-1}n^{3/2}\right)^{\frac{1}{3}}m^{\frac{2}{3}}+ m +D^{\frac{3}{2}}m^{\frac{1}{2}} + D^2\right)\\
         &< O\left(\left(c_1 + \delta^{\frac{1}{3}}\right)m^{\frac{1}{2}}n^{\frac{3}{4}}\right).
     \end{align*} By taking $\delta$ and $c_1$ to be small enough, we can guarantee that \begin{equation}\label{eq_choice1}
         |I(P \cap Z(f),L)|<O\left(\left(c_1 + \delta^{\frac{1}{3}}\right)m^{\frac{1}{2}}n^{\frac{3}{4}}\right)< \frac{c}{2}m^{\frac{1}{2}}n^{\frac{3}{4}} < \frac{1}{2}|I(P,L)|.
     \end{equation} Hence, $|I(P\setminus Z(f),L)| > |I(P,L)|/2 > cm^{1/2}n^{3/4}/2$.
    
    Next, we estimate the number of collinear $k$-tuples in $P$. Let $L \pitchfork O_i$ denote the set of lines $\ell\in L$ with $\ell\cap O_i \neq \emptyset$. Because any line intersects at most $D$ cells, we have $\sum_i |L \pitchfork O_i| \leq Dn$. Since $\lfloor x \rfloor \geq x-1$ and $\lfloor 0 \rfloor =0$, we can compute \begin{equation*}
        \sum_{O_i} \sum_{\ell\in L} \left\lfloor \frac{|\ell \cap P \cap O_i|}{k} \right\rfloor \geq \sum_{O_i} \sum_{\ell} \frac{|\ell \cap P \cap O_i|}{k} - \sum_{O_i} |L \pitchfork O_i| \geq \frac{|I(P\setminus Z(f), L)|}{k} - Dn.
    \end{equation*} By the inequalities above and taking $c_1$ to be small, we can guarantee that\begin{equation}\label{eq_choice2}
        \sum_{O_i} \sum_{\ell\in L} \left\lfloor \frac{|\ell \cap P \cap O_i|}{k} \right\rfloor \geq \frac{c}{2k} m^{\frac{1}{2}}n^{\frac{3}{4}} - Dn > \frac{c}{3k} m^{\frac{1}{2}}n^{\frac{3}{4}}.
    \end{equation} According to the pigeonhole principle and Lemma~\ref{polypar}, there exists $i_0$ such that \begin{equation*}
        \sum_{\ell\in L} \left\lfloor \frac{|\ell \cap P \cap O_{i_0}|}{k} \right\rfloor  \geq \Omega\left( \frac{c}{3k} m^{\frac{1}{2}}n^{\frac{3}{4}} / D^3 \right) = \Omega\left(\frac{c}{kc_1^3} m^{-1}n^{\frac{3}{2}}\right).
    \end{equation*}
    
    Now, we construct an auxiliary graph $G$ using the point-line arrangement in $O_{i_0}$. We define the vertex set $V(G) = P\cap O_{i_0}$ and write $N = |V(G)|$. Let $L_r$ be the set of lines $\ell\in L$ with $|\ell \cap V(G)| \geq r$. We define the edge set $E(G)$ to be the pairs coincident to any line in $L_k\setminus L_R$. Here, $R =R(k,c)$ is a constant we shall soon determine. By Lemma~\ref{polypar}, $N \leq O(m/D^3) = c_2 m^{-1/2}n^{3/4}$ for some constant $c_2=c_2(k,c)$. Applying the previous inequality, we can compute\begin{equation*}
        |I(V(G), L_k)| \geq k\cdot\sum_{\ell\in L} \left\lfloor \frac{|\ell \cap P \cap O_{i_0}|}{k} \right\rfloor  > \Omega\left(\frac{c}{c_1^3} m^{-1}n^{\frac{3}{2}}\right) \geq \Omega\left(\frac{c}{c_1^3c_2^2} N^2\right).
    \end{equation*} On the other hand, the Szemer\'edi--Trotter bound \eqref{szemereditrotter} tells us that \begin{equation*}
        |I(V(G), L_k)| < O\left(N^{2/3}|L_k|^{2/3} + N + |L_k|\right) \quad\text{and}\quad |L_r| < O\left({N^2}/{r^3} + {N}/{r}\right).
    \end{equation*} From $N^{2/3}|L_k|^{2/3} + N + |L_k| \gtrsim N^2$, we can solve for $|L_k| \gtrsim N^2$. Hence by taking a large enough $R$, we can guarantee that \begin{equation}\label{eq_choice3}
        |L_k \setminus L_R| \geq c_3 N^2,
    \end{equation} for some constant $c_3=c_3(k,c)$. Note that \eqref{eq_choice3} implies the existence of $c_3 N^2$ edge disjoint $k$-cliques in $G$. Additionally, the trivial bound $|E(G)| \leq |V(G)|^2$ implies that\begin{equation*}
       c_4 m^{-\frac{1}{2}}n^{\frac{3}{4}} \leq N \leq c_2 m^{-\frac{1}{2}}n^{\frac{3}{4}},
    \end{equation*} for some constant $c_4= c_4(k,c)$.

    Our proof continues by applying the regularity lemma on $G$. By Lemma~\ref{reglem}, for two constants $\epsilon=\epsilon(k,c)$ and $M_0=M_0(k,c)$ to be determined later, there is an equitable $\epsilon$-regular partition of $G$ into parts $V_1,\dots,V_M$ such that $M>M_0$. With foresight, we define a constant $K=3^{2k^3}$ (whose importance shall appear soon in Claim~\ref{sametypeclaim}). For each $1\leq i,j\leq M$, we remove all the edges in $G$ between $V_i$ and $V_j$ if either of the following happens: the indices $i = j$; or the pair $(V_i,V_j)$ isn't $\epsilon$-regular; or the density $d(V_i,V_j) < (k+2)(K\epsilon)^{1/k}$. We can estimate the number of deleted edges: at most $N^2/M_0$ edges are deleted due to ``same index'' as the partition is equitable and $M > M_0$; at most $\epsilon N^2$ edges are deleted due to ``irregularity'' as the partition is $\epsilon$-regular; at most $(k+2)(K\epsilon)^{1/k} N^2$ edges are deleted due to ``low density'' by the definition of density. Hence, we can choose $\epsilon$ and $1/M_0$ to be small enough such that\begin{equation}\label{eq_choice4}
        \text{\# of deleted edges} \leq \left(\frac{1}{M_0} + \epsilon + (k+2)(K\epsilon)^{\frac{1}{k}}\right) N^2 < c_3 N^2.
    \end{equation} As a consequence of \eqref{eq_choice3} and \eqref{eq_choice4}, the graph after edge-deletion still contains a $k$-clique. This means there are $k$ parts, (without loss of generality) $V_1,V_2,\dots,V_k$, each pair among which is $\epsilon$-regular and has density at least $(k+2)(K\epsilon)^{1/k}$.

    Next, we shall prune the parts $V_1,\dots, V_k$ such that their transversals are in general position. More precisely, we have the following claim.  Recall that any plane contains at most $\delta m^{-1}n^{3/2}$ lines in $L$.
    \begin{claim}\label{sametypeclaim}
        For a suitable choice of $\delta$, there exist subsets $V_i'\subset V_i$ such that, for $1\leq i\leq k$, we have $|V_i'| \geq |V_i|/K$ and every $\{v_1,v_2,\dots,v_k\}$ with $v_i\in V_i'$ is in general position.
    \end{claim}
    \noindent We suppose this claim is true and let $V_i'$ be as given. For every $1\leq i,j\leq k$, since $|V_i'| \geq |V_i|/K$ and $(V_i,V_j)$ is $\epsilon$-regular, we can check that $(V_i',V_j')$ is $(K\epsilon)$-regular (see Exercise~2.1.4 in \cite{zhao2023graph}). Notice that $\epsilon < 1/K$ as the coefficient of $N^2$ in \eqref{eq_choice4} is less than $1$. So we also have $|d(V_i,V_j) - d(V_i',V_j')| \leq \epsilon$, which implies\begin{equation*}
        d(V_i',V_j') \geq (k+2)(K\epsilon)^{1/k} - \epsilon > (k+1)(K\epsilon)^{1/k}.
    \end{equation*} Therefore, Lemma~\ref{graphcount} implies that there exists $c_6 |V_i'|^k$ many $k$-cliques $v_1, v_2, \dots, v_k$, with $v_i\in V_i'$, where $c_6=c_6(k,c)$ is some constant. Moreover, each such $k$-clique is in general position by the claim above. So we have the desired lower bound by recalling that $|V_i'| \geq N/(MK)$ and $N \geq c_4 m^{-\frac{1}{2}}n^{\frac{3}{4}}$.

    It suffices for us to prove Claim~\ref{sametypeclaim}. Our argument is inspired by the same-type lemma (see Theorem~9.3.1 in \cite{matousek2013lectures}).
    \begin{proof}[Proof of Claim~\ref{sametypeclaim}]
        We let $\{A_1,B_1\},\{A_2,B_2\},\dots$ be an enumeration of all pairs $\{A_\alpha,B_\alpha\}$ of nonempty subsets of $\{1,2,\dots,k\}$ such that $A_\alpha\cap B_\alpha=\emptyset$ and $|A_\alpha\cup B_\alpha|=4$. Here, it's obvious that $\alpha$ ranges from $1$ to $7\binom{k}{4}$. Let $U^0_i = V_i$ for all $1\leq i\leq k$ and we shall iteratively construct $U^\alpha_i \subset U^{\alpha-1}_i$ satisfying \begin{itemize}
            \item $|U^{\alpha}_i| \geq |U^{\alpha-1}_i|/3$ for $i\in A_{\alpha}\cup B_\alpha$ and $U^\alpha_i = U^{\alpha-1}_i$ for $i\not\in A_\alpha\cup B_\alpha$, and
            \item the unions of convex hulls $\bigcup_{i\in A_\alpha} \text{conv}(U^{\alpha}_i)$ and $\bigcup_{j\in B_\alpha} \text{conv}(U^{\alpha}_j)$ are strictly separated by a plane $\pi_\alpha$.
        \end{itemize} Then we can take $V_i' = U^\alpha_i$ for $1\leq i\leq k$ and $\alpha=7\binom{k}{4}$. For each particular $1\leq i \leq k$, it's obvious that $i \in A_\alpha\cup B_\alpha$ happens for at most $7\binom{k-1}{3} < 2k^3$ times, so we have $|V_i'|\geq |V_i|/K$ by the first property of this iterative process. Moreover, we argue that any $\{v_1,v_2,\dots,v_k\}$ with $v_i\in V_i'$ is in general position. Indeed, if four points $v_{i_1},v_{i_2},v_{i_3},v_{i_4}$ lie on the same plane, this plane should simultaneously intersect $\text{conv}(V_{i_1})$, $\text{conv}(V_{i_2})$, $\text{conv}(V_{i_3})$, and $\text{conv}(V_{i_4})$. However, this will contradict the second property of the iterative process and the following fact (Lemma~9.3.2 in \cite{matousek2013lectures}).
        \begin{lemma}
            Let $C_1,\dots,C_{d+1} \subset \mathbb{R}^{d}$ be convex sets. The following two conditions are equivalent:\begin{enumerate}
                \item There is no hyperplane simultaneously intersecting all of $C_1,C_2,\dots,C_{d+1}$.
                \item For each nonempty index set $I \subset \{1,2,\dots,d+1\}$, the sets $\bigcup_{i\in I} C_i$ and $\bigcup_{j\not\in I} C_j$ can be strictly separated by a hyperplane.
            \end{enumerate}
        \end{lemma}
        
        Now it suffices for us to specify the iterative construction at each step. Here, we describe the case when $A_\alpha=\{1,2\}$ and $B_\alpha=\{3,4\}$, and general index combinations can be dealt with in a similar fashion. By the Ham-Sandwich theorem (Theorem~1.4.3 in \cite{matousek2013lectures}), there is a plane $\pi$ bisecting $U^{\alpha-1}_{1},U^{\alpha-1}_{2},U^{\alpha-1}_{3}$ simultaneously. Here, ``bisecting'' means each open half-space determined by this plane intersects at most half of the point set. Note that there exists one half-space $H$ determined by $\pi$ such that its closure $\bar{H}$ contains half of $U^{\alpha-1}_{4}$, i.e. $|\bar{H} \cap U^{\alpha-1}_{4}| \geq |U^{\alpha-1}_{4}|/2$.
        
        We show that, provided $\delta$ is small enough, there is at most one index $1\leq i\leq 4$ such that \begin{equation}\label{eq_choice5}
            |\pi \cap U^{\alpha-1}_{i}| \geq |U^{\alpha-1}_{i}|/6.
        \end{equation} On the contrary, we suppose without loss of generality that \eqref{eq_choice5} holds for $i=1$ and $i=2$. Then we have $|\pi \cap U^{\alpha-1}_{i}| \geq |V_i|/(6K)$ for $i=1,2$, because $|U^{\alpha-1}_{i}| \geq |V_i|/K$ is guaranteed throughout the iterative process. Notice that \eqref{eq_choice4} and $k \geq 4$ implies $\epsilon < 1/(6K)$. As the pair $(V_1,V_2)$ is $\epsilon$-regular, we have $|d(\pi \cap U^{\alpha-1}_{1}, \pi \cap U^{\alpha-1}_{2}) -  d(V_1,V_2)| \leq \epsilon$. Then we can compute, using the bounds appearing above and the definition of density,\begin{equation*}
            |E(\pi \cap U^{\alpha-1}_{1}, \pi \cap U^{\alpha-1}_{2})| \geq \left((k+2)(K\epsilon)^{1/k} - \epsilon\right) \frac{|V_1|}{6K} \frac{|V_2|}{6K} = c_5 N^2 \geq c_5c_4^2 m^{-1}n^{\frac{3}{2}},
        \end{equation*} for some constant $c_5=c_5(k,c)$. Notice that each line $\ell \in L_k\setminus L_R$ contributes at most $R^2$ edges in $G$. So we can choose $\delta$ to be small enough such that \begin{equation*}
            \text{\# of lines on $\pi$} \geq |E(\pi \cap U^{\alpha-1}_{1}, \pi \cap U^{\alpha-1}_{2})|/R^2 > c_5c_4^2R^{-2} m^{-1}n^{\frac{3}{2}} >\delta m^{-1}n^\frac{3}{2}.
        \end{equation*} However, this is a contradiction to our hypothesis that any plane contains at most $\delta m^{-1}n^{3/2}$ lines in $L$. Hence the inequality \eqref{eq_choice5} holds for at most one index $i$.

        If \eqref{eq_choice5} holds for no $i$, then each half-space of $\pi$ contains one third of $U^{\alpha-1}_{i}$ for each $i=1,2,3$, and $H$ contains one third of $U^{\alpha-1}_{4}$. Then we can set $U_i^\alpha = U_i^{\alpha-1} \setminus \bar{H}$ for $i=1,2$ and $U_i^\alpha = U_i^{\alpha-1}\cap H$ for $i=3,4$. In this way, we have $|U^{\alpha}_i| \geq |U^{\alpha-1}_i|/3$ for $1\leq i\leq 4$, and the plane $\pi_\alpha =\pi$ strictly seperates $\text{conv}(U^{\alpha}_1) \cup \text{conv}(U^{\alpha}_2)$ and $\text{conv}(U^{\alpha}_3) \cup \text{conv}(U^{\alpha}_4)$. Hence we conclude the iterative step.

        If \eqref{eq_choice5} holds for $i=1$, then each half-space of $\pi$ contains one third of $U^{\alpha-1}_{i}$ for each $i=2,3$, and $H$ contains one third of $U^{\alpha-1}_{4}$. We consider a plane $\pi_\alpha$ parallel to $\pi$ but inside $H$. Denote the half-spaces of $\pi_\alpha$ as $H_1$ and $H_2$ such that $H_1 \not\subset H$ and $H_2 \subset H$. We can choose $\pi_\alpha$ to be very close to $\pi$ so that $H_2$ still contains one third of $U^{\alpha-1}_{i}$ for each $i=3,4$. Notice that $H_1$ contains half of $U^{\alpha-1}_{i}$ for $i = 1,2$ by the ``bisecting'' property of $\pi$. Hence, the iterative step can be done by setting $U_i^\alpha = U_i^{\alpha-1} \cap H_1$ for $i=1,2$ and $U_i^\alpha = U_i^{\alpha-1}\cap H_2$ for $i=3,4$.

        If \eqref{eq_choice5} holds for $i=4$, then each half-space of $\pi$ contains one third of $U^{\alpha-1}_{i}$ for each $i=1,2,3$. We consider a plane $\pi_\alpha$ parallel to $\pi$ but outside $H$. Denote the half-spaces of $\pi_\alpha$ as $H_1$ and $H_2$ such that $H_1 \cap H =\emptyset$ and $\bar{H} \subset H_2$. We can choose $\pi_\alpha$ to be very close to $\pi$ so that $H_1$ still contains one third of $U^{\alpha-1}_{i}$ for each $i=1,2$. Notice that $H_2$ contains half of $U^{\alpha-1}_{1}$ by $\bar{H} \subset H_2$, and half of $U^{\alpha-1}_{3}$ by the ``bisecting'' property. Hence, the iterative step can be done by setting $U_i^\alpha = U_i^{\alpha-1} \cap H_1$ for $i=1,2$ and $U_i^\alpha = U_i^{\alpha-1}\cap H_2$ for $i=3,4$.

        If \eqref{eq_choice5} holds for $i=2$, the iterative step can be similarly performed as when it holds for $i=1$. If \eqref{eq_choice5} holds for $i=3$, the iterative step can be similarly performed as when it holds for $i=4$. Hence we conclude the proof of Claim~\ref{sametypeclaim}.
    \end{proof}

    Finally, we remark that the parameters can be chosen properly. We first choose $c_1$ according to \eqref{eq_choice1} and \eqref{eq_choice2}. To satisfy \eqref{eq_choice1}, we assume $\delta$ is smaller than a fixed value. After $c_1$ is determined, we determine $R,\epsilon,M_0$ according to \eqref{eq_choice3} and \eqref{eq_choice4}. Then we choose $\delta$ to be further small enough so that our claim for \eqref{eq_choice5} holds. Hence we conclude the proof of Theorem~\ref{main}.
\end{proof}

\begin{proof}[Proof of Theorem~\ref{main1}]
    We apply Theorem~\ref{main} with $k$ and $c$ as given in our hypothesis, then we obtain a $\delta = \delta(k,c)$ such that if an arrangement $(P,L)$ of $m$ points and $n$ lines in $\mathbb{R}^3$ satisfies $|I(P,L)| > c n^{5/4}$ and any plane contains at most $\delta m^{-1}n^{3/2}$ lines in $L$, then $(P,L)$ contains $c_1(m^{-1/2}n^{3/4})^k$ many $k$-cliques in general position. Here, $c_1 = c_1(k,c)$ is some constant. By hypothesis at most $\sqrt{n}$ lines in $L$ are on a common plane, and notice that\begin{equation*}
        m\leq \delta n \iff \sqrt{n} \leq \delta m^{-1}n^{3/2}.
    \end{equation*} So it suffices for us to take $n_0$ large so that $c_1(m^{-1/2}n^{3/4})^k \geq c_1(n^{1/4}/\delta^{1/2})^k \geq 1$.
\end{proof}

\begin{proof}[Proof of Theorem~\ref{main2}]
    We write $m=|P_r(L)|$ and \eqref{guthkatzrich} implies $m \leq O(n^{3/2}/r^2)$. Since $r \geq n^{1/4}\log(n)$, we have $m \leq O(n / \log^2(n))$. If $m < n^{1/2}$, then we have $m < o(n^{3/2}/r^2)$ trivially because $n^{3/2}/r^2 \geq n^{1/2}\log^2(n)$ as a consequence of $r \leq n^{1/2}/\log(n)$. Hence, we have\begin{equation*}
        n^{1/2} \leq m \leq n / \log^2(n).
    \end{equation*}

    Now we fix a real $c > 0$ arbitrarily, and Theorem~\ref{main} produces a constant $\delta = \delta(k,c)$. As long as $n$ is large enough, we can compute\begin{equation*}
        \delta m^{-1} n^{3/2} \geq \delta n^{1/2} \log^2(n) > n^{1/2}.
    \end{equation*} Then we must have $|I(P_r(L),L)| < c m^{1/2}n^{3/4}$, otherwise the property of $\delta$ asserted by Theorem~\ref{main} implies that $(P_r(L),L)$ contains a $k$-clique in general position (provided $n$ is large). Hence, we can solve from the bound $rm \leq |I(P_r(L),L)|$ that $m < c^2 n^{3/2}/r^2$. Our claimed bound on $|P_r(L)|$ follows by taking $c$ to be arbitrarily small.
\end{proof}

\section{Proofs of Theorem~\ref{lowerbound2d} and Theorem~\ref{lowerbound3d}}\label{sec_lower}

For every integer $n\geq 1$,  we can define a dense point-line arrangement $\mathcal{A}^2_n = (P,L)$ in $\mathbb{R}^2$ by \begin{align*}
    P &= \left\{(a,b)|~ a,b\in \mathbb{Z},~ 1\leq a\leq n^{1/3},~ 1\leq b\leq n^{2/3}\right\},\\
    L &= \left\{y=ax+b|~ a,b\in \mathbb{Z},~ 1\leq a\leq n^{1/3},~ 1\leq b\leq n^{2/3}\right\}.
\end{align*} It's well-known that $|I(P,L)| > \Omega(n^{4/3})$. For example, we can check that at least $n/2$ points in $P$ are incident to at least $n^{1/3}/2$ lines in $L$ (see \cite{elekes2001sums} for details).

\begin{proof}[Proof of Theorem~\ref{lowerbound2d}]
    Let $(P', L')$ be randomly sampled from the two-dimensional dense arrangement $\mathcal{A}^2_N$ such that each point and each line is independently chosen with probability $q$. With foresight, we take $N = (2n)^{k/(k-2)}$ and $q = N^{-2/k}$.

    Let $X_1$ be the event that ``at least $qN/4$ points in $P'$ are incident to at least $qN^{1/3}/4$ lines in $L'$''. Inside $\mathcal{A}^2_N$, we can take $N/2$ points, denoted by $Q$, each incident to at least $N^{1/3}/2$ lines. For each $p \in Q \cap P'$, let $L_p$ be $N^{1/3}/2$ lines in $\mathcal{A}^2_N$ incident to $p$.  Since $\mathbb{E}(|Q \cap P'|) = qN/2$ and $\mathbb{E}(|L_p \cap L'|) = qN^{1/3}/2$, we can use Chernoff's inequality (see e.g. Theorem~2.8 in \cite{janson2011random}) to argue that\begin{equation*}
        \Pr\left[|Q \cap P'| < \frac{qN}{4}\right] < \exp\left(\frac{-qN}{24}\right) \quad\text{and}\quad \Pr\left[|L_p \cap L'| < \frac{qN^{1/3}}{4}\right] < \exp\left(\frac{-qN^{1/3}}{24}\right).
    \end{equation*} Hence using the union bound, we have $\Pr[\bar{X_1}] < N \exp({-qN^{1/3}}/{24})$.

    Let $X_2$ be the event that ``$qN/2<|P'|,|L'|<3qN/2$''. Since $\mathbb{E}(|P'|)=\mathbb{E}(|L'|) = qN$, we can check that $\Pr[\bar{X_2}]< 4 \exp(-qN/12)$ using Chernoff's inequality. Let $X_3$ be the event that ``$(P', L')$ contains no $k$-cliques in general position''. Notice that there are at most $N^k$ general position $k$-cliques contained in $\mathcal{A}^2_N$, and each one survives with probability $q^{k+\binom{k}{2}}$ in the random sampling. We can apply Markov's inequality to argue that $\Pr[\bar{X_3}] < N^{-1}$.

    By choosing a large constant in the $O$-notation of the statement we wish to prove, we can assume $k \geq 7$ and hence $\Pr[\bar{X_1}\vee \bar{X_2}\vee \bar{X_3}] \to 0$ as $n\to \infty$. As a consequence of event $X_2$, we have\begin{equation*}
        \min \left\{|P'|,|L'|\right\} > qN/2 = n > \max\left\{|P'|,|L'|\right\}/3.
    \end{equation*} So by a simple averaging argument, we can choose $P\subset P'$ and $L\subset L'$ with $|P|=|L|=n$ such that\begin{equation*}
        |I(P,L)| > |I(P',L')|/9 \geq \frac{q^2N^{\frac{4}{3}}}{144} = \frac{N^{\frac{4}{3}-\frac{4}{k}}}{144} = n^{4/3 - O(1/k)}.
    \end{equation*} Here, the second inequality above is a consequence of event $X_1$. Moreover, $(P, L)$ contains no $k$-cliques in general position as a consequence of event $X_3$.
\end{proof}

For integers $m,n\geq 1$ with $n^{1/2}\leq m\leq n^{3/2}$, we can define a dense point-line arrangement $\mathcal{A}^3_{m,n} = (P,L)$ in $\mathbb{R}^3$ by \begin{align*}
    P &= \left\{ (a,b,c)|~ a,b,c\in \mathbb{Z},~ 1\leq a \leq m^{\frac{1}{2}}/n^{\frac{1}{4}},~ 1\leq b,c\leq m^{\frac{1}{4}}n^{\frac{1}{8}} \right\},\\
    L &= \left\{ \genfrac\{.{0pt}{1}{y=ax + b}{z=cx + d} |~ a,b,c,d\in \mathbb{Z},~ 1\leq a,c \leq n^{\frac{3}{8}}/m^{\frac{1}{4}},~ 1\leq b,d\leq m^{\frac{1}{4}}n^{\frac{1}{8}} \right\}.
\end{align*} We can check that $|I(P,L)| > \Omega(m^{1/2}n^{3/4})$ since at least $m/4$ points in $P$ are incident to at least $\frac{1}{4}n^{3/4}/m^{1/2}$ lines in $L$. Let $\pi$ be an arbitrary plane in $\mathbb{R}^3$. We claim that $\pi$ contains at most $\sqrt{n}$ lines of $L$. Indeed, if $\pi$ is perpendicular to the $x$-axis, it contains no line of $L$. Otherwise, let $\ell\in L$ be contained in $\pi$ chosen with parameters $(a,b,c,d)$. Then there are at most $m^{\frac{1}{4}}n^{\frac{1}{8}}$ choices of $(b,d)$ since the point $(0,b,d)$ must be contained in $\pi$. There are at most $n^{\frac{3}{8}}/m^{\frac{1}{4}}$ choices of $(a,c)$ since $\pi$ is characterized by a linear equation. Hence our claim follows and $\mathcal{A}^3_{m,n}$ is an extremal construction towards \eqref{guthkatz}.

\begin{proof}[Proof of Theorem~\ref{lowerbound3d}]
    Let $(P', L')$ be randomly sampled from the three-dimensional dense arrangement $\mathcal{A}^3_{M, N}$ such that each point and each line is independently chosen with probability $q$. With foresight, we take $M = (2m)^{k/(k-2)}$, $N = (2n)\cdot(2m)^{-2/(k-2)}$, and $q = M^{-2/k}$. By choosing a large constant in the $O$-notation of the statement we wish to prove, we can assume $k \geq 12$ and check that $N^{1/2}\leq M\leq N^{3/2}$ using our hypothesis $n^{1/2}\leq m\leq n$.

    Let $X_1$ be the event that ``at least $qM/8$ points in $P'$ are incident to at least $\frac{q}{4}N^{3/4}/M^{1/2}$ lines in $L'$'', $X_2$ be the event that ``$qM/2<|P'|<3qM/2$ and $qN/2<|L'|<3qN/2$'', and $X_3$ be the event that ``$(P',L')$ contains no $k$-cliques in general position''. By similar arguments as in Theorem~\ref{lowerbound2d}, we can show that $X_1, X_2, X_3$ happens asymptotically almost surely as $m, n \to \infty$. Let $X_4$ be the event that ``any plane contains at most $2q\sqrt{N}$ lines in $L'$''. For any plane $\pi$, if $\pi$ contains more than $q\sqrt{N}$ lines in $L$, then we can apply Chernoff's inequality to argue that\begin{equation*}
        \Pr\left[\text{$\pi$ contains more than $2q\sqrt{N}$ lines in $L'$}\right]<\exp\left(-\frac{2q^2\sqrt{N}}{3}\right).
    \end{equation*} Since there are at most $N^2$ planes containing more than one line in $L$, we conclude that $X_4$ happens asymptotically almost surely.

    Now, by similar arguments as in Theorem~\ref{lowerbound2d} inside a sample where $X_1,X_2,X_3,X_4$ happen simultaneously, we can find $P\subset P'$ and $L\subset L'$ with $|P|=m$ and $|L|=n$ such that the arrangement $(P,L)$ contains no $k$-cliques in general position and $|I(P,L)| > m^{1/2-O(1/k)}n^{3/4}$. Moreover, any plane contains at most $2q\sqrt{N}=o(\sqrt{n})$ lines in $L$ as a consequence of event $X_4$. Hence we conclude the proof.
\end{proof}

\section{Remarks}\label{sec_remark}

\noindent 1. By taking $m=n$ in Theorem~\ref{main}, we can cover the size-symmetric case missed by our Theorem~\ref{main1}, but with a strengthened ``truly three-dimensional'' condition.
\begin{proposition}
     For every integer $k \geq 4$ and real $c > 0$, there exist constants $n_0 = n_0(k,c)$ and $\delta = \delta(k,c)$ such that if an arrangement $(P,L)$ of $n$ points and $n$ lines in $\mathbb{R}^3$ satisfies: $n \geq n_0$; $|I(P,L)| > c n^{5/4}$; and at most $\delta \sqrt{n}$ lines of $L$ lie in a common plane, then $(P,L)$ contains a $k$-clique in general position.
\end{proposition}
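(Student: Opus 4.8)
The plan is to deduce this proposition directly from Theorem~\ref{main} by taking $m=n$. Apply Theorem~\ref{main} with the given integer $k\geq 4$ and real $c>0$ to obtain the constant $\delta=\delta(k,c)$. Now suppose $(P,L)$ is an arrangement of $n$ points and $n$ lines in $\mathbb{R}^3$ with $n\geq n_0$, $|I(P,L)|>cn^{5/4}$, and at most $\delta\sqrt{n}$ lines of $L$ in a common plane. Setting $m=n$, the size hypothesis $n^{1/2}\leq m\leq n$ of Theorem~\ref{main} becomes the trivially true $n^{1/2}\leq n\leq n$; the incidence hypothesis $|I(P,L)|>c\,m^{1/2}n^{3/4}$ becomes exactly $|I(P,L)|>cn^{5/4}$; and the coplanarity hypothesis ``any plane contains at most $\delta m^{-1}n^{3/2}$ lines of $L$'' becomes ``any plane contains at most $\delta\sqrt{n}$ lines of $L$'', which is precisely the strengthened ``truly three-dimensional'' condition in the statement. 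Hence all hypotheses of Theorem~\ref{main} are met.

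Theorem~\ref{main} then produces $\Omega\left((m^{-1/2}n^{3/4})^k\right)=\Omega\left(n^{k/4}\right)$ many $k$-cliques of $(P,L)$ in general position, with the implied constant depending only on $k$ and $c$. Since $k\geq 4$, the quantity $n^{k/4}$ tends to infinity, so it suffices to choose $n_0=n_0(k,c)$ large enough that this lower bound is at least $1$ for every $n\geq n_0$; then $(P,L)$ contains a $k$-clique in general position, as desired.

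The only point worth emphasizing is why the strengthened coplanarity hypothesis (at most $\delta\sqrt{n}$ rather than $\sqrt{n}$ coplanar lines) is needed here, unlike in Theorem~\ref{main1}. In that earlier result one used the slack $m\leq\delta n$, equivalently $\sqrt{n}\leq\delta m^{-1}n^{3/2}$, so the weaker bound $\sqrt{n}$ on coplanar lines already implied the bound required by Theorem~\ref{main}. When $m=n$ this slack vanishes, since $\delta m^{-1}n^{3/2}=\delta\sqrt{n}$, and the factor $\delta$ must be built into the hypothesis from the outset. Consequently there is no genuine obstacle in this argument: all of the substantive work is carried out already in the proof of Theorem~\ref{main}, and this proposition is a routine specialization.
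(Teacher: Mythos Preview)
Your proposal is correct and follows exactly the paper's approach: the paper introduces this proposition with the single remark ``By taking $m=n$ in Theorem~\ref{main}'', and your proof carries out precisely that specialization, verifying that each hypothesis of Theorem~\ref{main} collapses to the corresponding hypothesis here and that the $\Omega(n^{k/4})$ count of cliques exceeds $1$ once $n\geq n_0$. Your additional paragraph explaining why the $\delta\sqrt{n}$ condition cannot be relaxed to $\sqrt{n}$ when $m=n$ is a helpful elaboration beyond what the paper states explicitly.
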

\noindent However, this result would only be interesting if there exists an arrangement of $n$ points and $n$ lines with at most $\delta \sqrt{n}$ lines of $L$ in a common plane and $c n^{5/4}$ many incidences. Note that $\delta$ is a very large number depending on $c$. Although the Guth--Katz bound \eqref{guthkatz} doesn't rule out such an arrangement, we believe it is unlikely to exist.

\medskip

\noindent 2. A \textit{$k$-grid} inside an arrangement $(P, L)$ is defined as a pair $(L_1, L_2)$ of disjoint $k$-subsets of $L$ such that $\{\ell_1\cap \ell_2|~ \ell_i\in L_i\}$ consists of $k^2$ distinct points in $P$. Mirzaei and Suk \cite{mirzaei2021grids} proved that any arrangement of $n$ points and $n$ lines in $\mathbb{R}^2$ without $k$-grids has at most $O(n^{4/3 - c_k})$ incidences for some $c_k > 0$. It would be interesting if we could obtain such a polynomial improvement for ``truly three-dimensional'' point-line arrangements in $\mathbb{R}^3$.

We remark that if we forbid $O(1)$ lines of $(P, L)$ in a common plane or a degree $2$ algebraic surface, then we automatically forbid $O(1)$-grids in $(P, L)$. Indeed, suppose $(P, L)$ contains a $k$-grid $(L_1, L_2)$ with $k \geq 3$ and we take $\ell_1, \ell_2, \ell_3 \in L_1$. By the theory of reguli, there is a degree $2$ polynomial $f$ such that $\ell_1,\ell_2,\ell_3 \in Z(f)$ (see e.g. Proposition~8.14 in \cite{guth2016polynomial}). Since a line not in $Z(f)$ has at most $2$ intersections with $Z(f)$, each $\ell \in L_2$ must be in $Z(f)$ as it intersects $\ell_1,\ell_2,\ell_3$ in distinct points.

\bibliographystyle{abbrv}
\bibliography{main}

\begin{thebibliography}{10}

\bibitem{barany1998positive}
I.~B{\'a}r{\'a}ny and P.~Valtr.
\newblock {A positive fraction Erd\H os--Szekeres theorem}.
\newblock {\em Discrete Comput. Geom.}, 19:335--342, 1998.

\bibitem{dasu2023structural}
S.~Dasu, A.~Sheffer, and J.~Shen.
\newblock {Structural Szemer\'edi-Trotter for Lattices and their
  Generalizations}.
\newblock {\em arXiv preprint arXiv:2310.00191}, 2023.

\bibitem{elekes2001sums}
G.~Elekes.
\newblock {Sums versus products in number theory, algebra and Erd\H{o}s
  geometry}.
\newblock In {\em Paul Erd{\H o}s and his Mathematics II}, volume~11 of {\em
  Bolyai Society Mathematical Studies}, pages 241--290. Springer-Verlag,
  Berlin, 2001.

\bibitem{guth2016polynomial}
L.~Guth.
\newblock {\em Polynomial Methods in Combinatorics}.
\newblock American Mathematical Soc., 2016.

\bibitem{guth2015erdHos}
L.~Guth and N.~H. Katz.
\newblock {On the Erd{\H{o}}s distinct distances problem in the plane}.
\newblock {\em Ann. of Math.}, pages 155--190, 2015.

\bibitem{janson2011random}
S.~Janson, T.~Luczak, and A.~Rucinski.
\newblock {\em {Random Graphs}}.
\newblock John Wiley \& Sons, 2011.

\bibitem{katz2023structure}
N.~Katz and O.~Silier.
\newblock {Structure of cell decompositions in Extremal Szemer{\'e}di--Trotter
  examples}.
\newblock {\em arXiv preprint arXiv:2303.17186}, 2023.

\bibitem{matousek2013lectures}
J.~Matousek.
\newblock {\em Lectures on Discrete Geometry}.
\newblock Springer Science \& Business Media, 2013.

\bibitem{mirzaei2021grids}
M.~Mirzaei and A.~Suk.
\newblock {On grids in point-line arrangements in the plane}.
\newblock {\em Discrete Comput. Geom.}, 65:1232--1243, 2021.

\bibitem{sheffer2023structural}
A.~Sheffer and O.~Silier.
\newblock {A Structural Szemer{\'e}di--Trotter Theorem for Cartesian Products}.
\newblock {\em Discrete Comput. Geom.}, pages 1--21, 2023.

\bibitem{solymosi2006dense}
J.~Solymosi.
\newblock {Dense arrangements are locally very dense. I}.
\newblock {\em SIAM J. Discrete Math.}, 20(3):623--627, 2006.

\bibitem{suk2021hasse}
A.~Suk and I.~Tomon.
\newblock {Hasse diagrams with large chromatic number}.
\newblock {\em Bull. London Math. Soc.}, 53(3):747--758, 2021.

\bibitem{szemeredi1983extremal}
E.~Szemer{\'e}di and W.~T. Trotter.
\newblock Extremal problems in discrete geometry.
\newblock {\em Combinatorica}, 3:381--392, 1983.

\bibitem{zhao2023graph}
Y.~Zhao.
\newblock {\em Graph Theory and Additive Combinatorics}.
\newblock Cambridge University Press, 2023.

\end{thebibliography}
\end{document}